\definecolor{webred}{rgb}{0.75,0,0}
\definecolor{webgreen}{rgb}{0,0.75,0}
\definecolor{refkey}{gray}{0.75}
\numberwithin{equation}{section}
\newtheorem{theo}{Theorem}[section]
\newtheorem{lem}{Lemma}[section]
\newtheorem{Def}[theo]{Definition}
\theoremstyle{remark}
\newtheorem{rem}{Remark}[section]
\newcommand{\ep}{\varepsilon}
\def\R{{\mathbb{R}}}
\def\d{\displaystyle}
\def\e{{\varepsilon}}
\def\p{\partial}
\date{}
\subjclass[2010]{35L71,  35B44}
\keywords{blow-up, lifespan, nonlinear wave equations, scale-invariant damping, time-derivative nonlinearity.}
\begin{document}

\title[Improvement on blow-up of the solution of a damped wave equation - Invariant case]{Improvement on the blow-up of the wave equation  with the  scale-invariant damping and combined nonlinearities}
\author[M. Hamouda  and M. A. Hamza]{Makram Hamouda$^{1}$ and Mohamed Ali Hamza$^{1}$}
\address{$^{1}$ Basic Sciences Department, Deanship of Preparatory Year and Supporting Studies, P. O. Box 1982, Imam Abdulrahman Bin Faisal University, Dammam, KSA.}

\medskip

\email{mmhamouda@iau.edu.sa (M. Hamouda)} 
\email{mahamza@iau.edu.sa (M.A. Hamza)}

\pagestyle{plain}


\maketitle
\begin{abstract}
We consider in this article the  damped wave equation,  in the \textit{scale-invariant case} with combined two nonlinearities, which reads as follows:
\begin{displaymath}
\d (E) \hspace{1cm} u_{tt}-\Delta u+\frac{\mu}{1+t}u_t=|u_t|^p+|u|^q,
\quad \mbox{in}\ \R^N\times[0,\infty),
\end{displaymath}
with small initial data.\\
Compared to our previous work \cite{Our}, we show in this article that the first hypothesis on the damping coefficient $\mu$, namely $\mu < \frac{N(q-1)}{2}$, can be removed, and the second  one  can be extended from $(0,  \mu_*/2)$ to $(0,  \mu_*)$ where $\mu_*>0$ is solution of $(q-1)\left((N+\mu_*-1)p-2\right) = 4$.  Indeed, owing to a better understanding of the influence of the damping term in the global dynamics of the solution, we think that this new interval for $\mu$  characterizes better the threshold between the blow-up and the global existence regions. Moreover, taking advantage of the techniques employed in the problem $(E)$, we also improve the result in \cite{LT2,Palmieri}  in relationship with the Glassey conjecture for the solution of $(E)$ without the nonlinear term $|u|^q$. More precisely, we extend the blow-up region from $p \in (1, p_G(N+\sigma)]$, where $\sigma$ is given by \eqref{sigma} below, to $p \in (1, p_G(N+\mu)]$ giving thus a better estimate of the lifespan in this case.
\end{abstract}


\section{Introduction}
\par\quad

We consider the following  family of semilinear damped wave equations
\begin{equation}
\label{G-sys}
\left\{
\begin{array}{l}
\d u_{tt}-\Delta u+\frac{\mu}{1+t}u_t=a|u_t|^p+b|u|^q,
\quad \mbox{in}\ \R^N\times[0,\infty),\\
u(x,0)=\e f(x),\ u_t(x,0)=\e g(x), \quad  x\in\R^N,
\end{array}
\right.
\end{equation}
where $a$ and $b$ are nonnegative constants and $\mu \ge 0$. Moreover, the parameter $\e$ is a positive number describing the size of the initial
data,  and $f$ and $g$ are positive functions which are compactly supported on  $B_{\R^N}(0,R), R>0$.

Throughout this article, we suppose that $p, q>1$ and $q \le \frac{2N}{N-2}$ if $N \ge 3$.

The corresponding linear equation  to \eqref{G-sys} is given by
\begin{equation}\label{1.2}
\d u^L_{tt}-\Delta u^L+\frac{\mu}{1+t}u^L_t=0.
\end{equation} 
It is well-known that the equation \eqref{1.2} is invariant under the following transformation:
$$\tilde{u}^L(x,t)=u^L(\Omega x, \Omega(1+t)-1), \ \Omega>0.$$
The above scaling justifies the designation of the \textit{scale-invariant} case for \eqref{G-sys}. It is interesting to recall that the scale-invariant case is the interface between the class of parabolic equations (for $\mu$ large enough) and the one of hyperbolic equations (for small values of $\mu$). In fact, in this transition, the parameter $\mu$ plays a crucial role in determining the behavior of the solution of \eqref{1.2}, see for example \cite{Wirth1}.\\

Let $\mu = 0$ and 
$(a,b)=(0,1)$ in \eqref{G-sys}, then the equation \eqref{G-sys} is thus  the classical semilinear wave equation for which we have the Strauss conjecture. This case  is characterized by a critical power, denoted by $q_S$,  which is solution of the following quadratic equation
\begin{equation}
(N-1)q^2-(N+1)q-2=0,
\end{equation}
and is given  by
\begin{equation}
q_S=q_S(N):=\frac{N+1+\sqrt{N^2+10N-7}}{2(N-1)}.
\end{equation}
More precisely,  if $q \le q_S$ then there is no global solution for  \eqref{G-sys} under suitable sign assumptions for the initial data, and for $q > q_S$ a global solution exists for small initial data; see e.g. \cite{John2,Strauss,YZ06,Zhou} among many other references. \\

Now,  the case $\mu = 0$  and $(a,b)=(1,0)$ is obeying to the Glassey conjecture which asserts that the critical power $p_G$ should be given by
\begin{equation}\label{Glassey}
p_G=p_G(N):=1+\frac{2}{N-1}.
\end{equation}
The above critical value, $p_G$, gives rise to two regions for the power $p$ ensuring the global existence (for $p>p_G$) or the nonexistence (for $p \le p_G$) of a global small data solution; see e.g. \cite{Hidano1,Hidano2,John1,Rammaha,Sideris,Tzvetkov,Zhou1}.\\

It is worth-mentioning that the presence of two nonlinearities in \eqref{G-sys} has an interesting effect on the existence or the nonexistence of global in-time solution of \eqref{G-sys} and its lifespan. In fact, compared to a one single nonlinearity,  the presence of mixed nonlinearities produces an additional new blow-up region. 

First, we focus on the case  $\mu = 0$ and $a, b \neq 0$, thus, without loss of generality we may assume that $(a,b)=(1,1)$. It is easy to see that  in this case, together with the assumption that  the powers $p$ and $q$ satisfy   $p \le p_G$ or $q \le q_S$, the blow-up of the solution of  \eqref{G-sys} can be handled in a similar way. Therefore, for $p > p_G$ and $q > q_S$, the new blow-up border  is characterized by the following relationship between $p$ and $q$:
\begin{equation}\label{1.5}
\lambda(p, q, N):=(q-1)\left((N-1)p-2\right) < 4.
\end{equation}
We refer the reader to \cite{Dai,Han-Zhou,Hidano3,Wang} for more details.\\
Note that it is proven in \cite{Hidano3} that,  for $p > p_G$ and $q > q_S$, the equality in \eqref{1.5} yields the global existence of the solution of \eqref{G-sys} (with $\mu = 0$ and $(a,b)=(1,1)$) without going through the intermediate step of ``almost global solution''. This is in fact related to the presence of mixed  nonlinearities. Naturally, it is interesting to see whether this phenomenon still occurs for the damping case $\mu > 0$. 

Now, we focus on the case $\mu > 0$. In fact, for $(a,b)=(0,1)$, it is known in the literature that if the  weak damping coefficient $\mu$ is relatively large, then the equation \eqref{G-sys} (with $(a,b)=(0,1)$) behaves like the corresponding heat equation; see e.g. \cite{dabbicco1,dabbicco2,wakasugi}. Though, if  $\mu$ is small, then the behavior of \eqref{G-sys} is following the one of the corresponding wave equation. More precisely, for $\mu$ small, it was proven, in 
\cite{LTW} and later on  in \cite{Ikeda} with a substantial improvement, that the dimension in the critical power $q_S$ is shifted by $\mu>0$  compared to the one in the case without damping ($\mu=0$), and hence we have  for 
$$0<\mu < \frac{N^2+N+2}{N+2} \quad \text{and} \quad 1<q\le q_S(N+\mu),$$
the blow-up of the solution of \eqref{G-sys}. These blow-up results have been improved in many ways in \cite{Palmieri-Reissig, Palmieri, Palmieri-Tu, Tu-Lin1, Tu-Lin}. \\
In the particular case $\mu=2$ and $N=2,3$, the same above observation is proven in \cite{Dab2}, see also \cite{Dab1}. For the global existence in this case ($\mu=2$) we refer the reader to \cite{Dab1, Dab2, Palmieri2}.\\

On the other hand for $\mu > 0$ and $(a,b)=(1,0)$, the authors prove in \cite{LT2} a blow-up result for the solution of \eqref{G-sys} (with $(a,b)=(1,0)$) 
and they give an upper bound of the lifespan. We stress the fact that in this case there is no restriction  for $\mu$  in the blow-up region for $p$, namely $p \in (1, p_G(N+2\mu)]$. Recently, Palmieri and Tu  proved in \cite{Palmieri}, among many other interesting results, a more accurate blow-up interval for $p$ in relationship with the solution of  \eqref{G-sys} with $(a,b)=(1,0)$, one time-derivative nonlinearity (that is \eqref{T-sys-bis} below) and a mass term. More precisely, it is proven that the solution of this problem blows up in finite time for $p \in (1, p_G(N+\sigma)]$ where 
\begin{equation}\label{sigma}
\sigma=\left\{
\begin{array}{lll}
2 \mu & \textnormal{if} &\mu \in [0,1),\\
2 & \text{if} &\mu \in [1,2),\\
\mu & \text{if} &\mu \ge 2.
\end{array}
\right.
\end{equation}
Of course the problems studied  in \cite{Palmieri} are more general, but, we want to point out here  the improvement obtained for \eqref{T-sys-bis} below.\\

In this article, we are interested in the study of the following Cauchy problem which is related to
the scale-invariant wave equation with combined nonlinearities. More precisely, we consider
\begin{equation}
\label{T-sys}
\left\{
\begin{array}{l}
\d u_{tt}-\Delta u+\frac{\mu}{1+t}u_t=|u_t|^p+|u|^q, 
\quad \mbox{in}\ \R^N\times[0,\infty),\\
u(x,0)=\e f(x),\ u_t(x,0)=\e g(x), \quad  x\in\R^N,
\end{array}
\right.
\end{equation}
where $\mu > 0$, $\ N \ge 1$, $\e>0$ is a sufficiently  small parameter
and  $f,g$ are positive functions chosen in the energy space with compact support.

The emphasis in our  work is the study of the Cauchy problem (\ref{T-sys}) for $\mu>0$ and the influence of the parameter $\mu$ on the blow-up result and the lifespan estimate. For the analogous system of (\ref{T-sys}) with  $(\mu/(1+t))u_t$ being replaced by $(\mu/(1+t)^\beta)u_t$ and  $\beta >1$, which corresponds to the scattering case,  Lai and  Takamura proved in \cite{LT3} that, comparing to the wave equation without damping, the scattering damping term has no influence in the dynamics.  However, in the scale-invariant case ($\beta =1)$ the situation is different. Indeed,  as expected, the combination of a weak damping term and the two mixed nonlinearities  is playing here a crucial role. This fact has been shown in our previous work \cite{Our} where the focus were on the obtaining of the equation of the hyperbola part for the blow-up region. In the present work, owing to a better comprehension of the role of the weak damping term in (\ref{T-sys}) in the dynamics and following the technique used in \cite{Tu-Lin} (the method is based on the use of some test functions which closely describe the solution of the linear part of  (\ref{T-sys})), we will here improve the bound of the blow-up region in the hyperbola part. We precisely show in this article that \eqref{1.5} holds for 
$\lambda(p, q, N+\mu)$ instead of $\lambda(p, q, N+2\mu)$ which was obtained in  \cite{Our}. Hence, the new hypothesis on $p$ and $q$ ($\lambda(p, q, N+\mu)<4$) constitutes a shift by $\mu$ of the  dimension $N$. Naturally, we obtain a better bound for the lifespan. We face here a situation similar to the Strauss exponent case, $q_S$, as explained above.  However,  thanks to a good choice of the functional as in \eqref{G-exp} below, we succeed in this article to remove one of the smallness  hypotheses on $\mu$, namely $\mu < \frac{N(q-1)}{2}$,  which was assumed in \cite[Theorem 2.3]{Our}. \\

Furthermore, keeping in mind all the hypotheses as for (\ref{T-sys}), we consider here the following equation with only one time-derivative nonlinearity, namely 
\begin{equation}
\label{T-sys-bis}
\left\{
\begin{array}{l}
\d u_{tt}-\Delta u+\frac{\mu}{1+t}u_t=|u_t|^p, 
\quad \mbox{in}\ \R^N\times[0,\infty),\\
u(x,0)=\e f(x),\ u_t(x,0)=\e g(x), \quad  x\in\R^N.
\end{array}
\right.
\end{equation}
Taking advantage of the techniques used for (\ref{T-sys}), we will improve the blow-up interval, $p \in (1, p_G(N+\sigma)]$ ($\sigma$ is given by \eqref{sigma}),  obtained in \cite{Palmieri}, which is itself an improvement of \cite{LT2},  to  reach the interval $p \in (1, p_G(N+\mu)]$, for $\mu \in (0,2)$. However, for $\mu \ge 2$, our result for (\ref{T-sys-bis}) coincides  with the one in \cite{Palmieri}. We may conjecture that the obtained upper bound exponent is the critical one in the sense that it gives the threshold between the blow-up and the global existence regions. \\

The  article is organized as follows. We start in Section \ref{sec-main} by introducing  the weak formulation of (\ref{T-sys}) in the energy space. Then, we state the main theorems of our work.  Some technical lemmas are thus proven in Section \ref{aux}. These auxiliary results, among other tools, are used to conclude  the proof of the main results in Sections \ref{proof} and \ref{sec-ut}.  More precisely, in Section \ref{proof} (resp. Sec. \ref{sec-ut}), we prove that the solution of (\ref{T-sys}) (resp. (\ref{T-sys-bis})) for $p$ and $q$ satisfying $\lambda(p, q, N+\mu)<4$ (resp. for $p$ verifying $p \in (1, p_G(N+\mu)]$) blows up in finite time.

\section{Main Results}\label{sec-main}
\par

This section is aimed to state our main results. For that purpose, we first start by giving the definition of the solution of (\ref{T-sys}) in the corresponding energy space. More precisely, the weak formulation associated with  (\ref{T-sys}) reads  as follows:
\begin{Def}\label{def1}
 We say that $u$ is a weak  solution of
 (\ref{T-sys}) on $[0,T)$
if
\begin{displaymath}
\left\{
\begin{array}{l}
u\in \mathcal{C}([0,T),H^1(\R^N))\cap \mathcal{C}^1([0,T),L^2(\R^N)), \vspace{.1cm}\\
 u \in L^q_{loc}((0,T)\times \R^N) \ \text{and} \ u_t \in L^p_{loc}((0,T)\times \R^N),
 \end{array}
  \right.
\end{displaymath}
satisfies, for all $\Phi\in \mathcal{C}_0^{\infty}(\R^N\times[0,T))$ and all $t\in[0,T)$, the following equation:
\begin{equation}
\label{energysol2}
\begin{array}{l}
\d\int_{\R^N}u_t(x,t)\Phi(x,t)dx-\int_{\R^N}u_t(x,0)\Phi(x,0)dx \vspace{.2cm}\\
\d -\int_0^t  \int_{\R^N}u_t(x,s)\Phi_t(x,s)dx \,ds+\int_0^t  \int_{\R^N}\nabla u(x,s)\cdot\nabla\Phi(x,s) dx \,ds\vspace{.2cm}\\
\d  +\int_0^t  \int_{\R^N}\frac{\mu}{1+s}u_t(x,s) \Phi(x,s)dx \,ds=\int_0^t \int_{\R^N}\left\{|u_t(x,s)|^p+|u(x,s)|^q\right\}\Phi(x,s)dx \,ds.
\end{array}
\end{equation}
\end{Def}

Obviously, the weak formulation corresponding to (\ref{T-sys-bis}) can be also given by \eqref{energysol2} with simply ignoring the nonlinear term $|u|^q$ with the necessary modifications accordingly.

The following theorems state the  main results of this article.
\begin{theo}
\label{blowup}
Let $p, q$ and $\mu >0$  such that 
\begin{equation}\label{assump}
\lambda(p, q, N+\mu)<4,
\end{equation}
where the expression of $\lambda$ is given by \eqref{1.5}, and $p>p_G(N+\mu)$ and $q>q_S(N+\mu)$.\\
Assume that  $f\in H^1(\R^N)$ and $g\in L^2(\R^N)$ are non-negative functions which are compactly supported on  $B_{\R^N}(0,R)$,
and  do not vanish everywhere.
Let $u$ be an energy solution of \eqref{T-sys} on $[0,T_\e)$ such that $\mbox{\rm supp}(u)\ \subset\{(x,t)\in\R^N\times[0,\infty): |x|\le t+R\}$. 
Then, there exists a constant $\e_0=\e_0(f,g,N,R,p,q,\mu)>0$
such that $T_\e$ verifies
\[
T_\e\leq
 C \,\e^{-\frac{2p(q-1)}{4-\lambda(p, q, N+\mu)}},
\]
 where $C$ is a positive constant independent of $\e$ and $0<\e\le\e_0$.
\end{theo}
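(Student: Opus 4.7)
The plan is to couple two nonlinear integral/differential inequalities and iterate them in the spirit of Kato's lemma to derive both blow-up and the explicit lifespan bound. The key innovation, compared with \cite{Our}, will be the precise choice of test functions that \emph{exactly} solve the associated linear (adjoint) problem, rather than approximating it; this is what should reduce the effective dimensional shift from $2\mu$ to $\mu$.

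First, I would work with the volume-type functional $F_1(t):=\int_{\R^N} u(x,t)\,dx$. Choosing $\Phi\equiv 1$ on the propagation cone $\{|x|\le t+R\}$ in the weak formulation \eqref{energysol2} kills the Laplacian, and writing the damping in divergence form gives
\begin{equation*}
\frac{d}{dt}\bigl[(1+t)^{\mu} F_1'(t)\bigr] \;=\; (1+t)^{\mu}\int_{\R^N}\bigl(|u_t|^p+|u|^q\bigr)\,dx.
\end{equation*}
H\"older's inequality on the ball of radius $t+R$ then yields
\begin{equation*}
\frac{d}{dt}\bigl[(1+t)^{\mu} F_1'(t)\bigr] \;\ge\; C\,(1+t)^{\mu-N(q-1)}|F_1(t)|^q.
\end{equation*}
Positivity of the data and finite propagation propagate to $F_1,F_1'\ge 0$, giving a first polynomial lower bound on $F_1$. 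The multiplier $(1+t)^{\mu}$ inside the derivative is the crucial device embedding the damping into a divergence structure, and this is what should allow me to remove the restriction $\mu<N(q-1)/2$ imposed in \cite[Theorem~2.3]{Our}.

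Second, I would introduce a test function resolving the \emph{adjoint} scale-invariant linear equation. With $\phi(x)$ the Yordanov--Zhang eigenfunction ($\Delta\phi=\phi$, $\phi(x)\sim|x|^{-(N-1)/2}e^{|x|}$), set
\[
\psi_1(x,t):=\rho(t)\,\phi(x),\qquad \rho(t):=(1+t)^{(\mu-1)/2}K_{\nu}(1+t),
\]
with the index $\nu$ chosen so that $\psi_1$ solves the adjoint linear problem exactly, yielding the decay $\rho(t)\sim(1+t)^{\mu/2-1}e^{-t}$ at infinity. Defining $F_2(t):=\int_{\R^N}u(x,t)\,\psi_1(x,t)\,dx$, testing \eqref{energysol2} against $\psi_1$ and discarding the $|u|^q$ contribution yields, after careful handling of the boundary terms using compact support,
\[
F_2(t)\;\ge\; C\varepsilon \;+\; C\int_0^t(1+s)^{-\frac{(N+\mu-1)(p-1)}{2}}|F_2(s)|^p\,ds.
\]
The precise exponent $-(N+\mu-1)(p-1)/2$, which encodes the shift of $\mu$ (not $2\mu$) in the effective dimension, comes directly from $\rho$ through the Bessel asymptotics.

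Finally, I would close the argument by a coupled iteration: successively substituting polynomial lower bounds $F_j(t)\ge A_k(1+t)^{a_k}\varepsilon^{m_k}$ from one functional into the inequality for the other produces a recursion for $(A_k,a_k,m_k)$ whose exponent part grows multiplicatively by factors of $p$ or $q$. The threshold $\lambda(p,q,N+\mu)<4$ is exactly the condition that makes the recursion force $F_j(t)$ to infinity in finite time, and tracking the dependence of the constants on $\varepsilon$ delivers the announced lifespan $T_\varepsilon\le C\,\varepsilon^{-2p(q-1)/(4-\lambda(p,q,N+\mu))}$. The principal obstacle will be the construction and estimation of $\psi_1$: the naive choice $e^{-t}\phi(x)$, which ignores the damping, reproduces the $2\mu$ shift of \cite{Our}, whereas matching the modified Bessel profile $\rho$ with $\phi$ inside the light cone, together with sharp pointwise estimates on $K_{\nu}$, is what should squeeze the shift down to $\mu$ and simultaneously allow the constraint $\mu<N(q-1)/2$ to be discarded.
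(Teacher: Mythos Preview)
Your outline captures the right high-level idea---replace the crude test function $e^{-t}\phi(x)$ by the exact adjoint solution $\rho(t)\phi(x)$ built from a modified Bessel function---but there is a genuine gap in your second step, and both your mechanism for removing the constraint $\mu<N(q-1)/2$ and your closing argument differ from what the paper actually does.

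\textbf{The gap.} You define $F_2(t)=\int_{\R^N} u\,\psi_1\,dx$ and then claim an integral inequality with $|F_2(s)|^p$ on the right, coming from the nonlinearity $|u_t|^p$. This step does not go through: H\"older against $|u_t|^p$ produces a lower bound in terms of $\bigl(\int u_t\,\psi_1\bigr)^p$, not $\bigl(\int u\,\psi_1\bigr)^p$. The paper works instead with $G_2(t):=\int_{\R^N} u_t\,\psi\,dx$, and the key new lemma (Lemma~\ref{F11}) is the \emph{coercivity} $G_2(t)\ge C_{G_2}\varepsilon$ for $t\ge T_1$, improving the bound $\varepsilon(1+t)^{-\mu/2}$ from \cite{Our}. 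Proving this requires first establishing $G_1(t)=\int u\,\psi\ge C\varepsilon$ (Lemma~\ref{F1}), then combining the first-order ODEs satisfied by $G_1$ and $G_2$ and using the sharp asymptotics $\rho'(t)/\rho(t)=-1+O(t^{-1})$; it is not a direct consequence of testing \eqref{energysol2} against $\psi$. (Incidentally, the correct profile is $\rho(t)=(1+t)^{(\mu+1)/2}K_{(\mu-1)/2}(1+t)\sim (1+t)^{\mu/2}e^{-t}$; your exponent $(\mu-1)/2$ gives the wrong asymptotic power and would shift the exponents in the subsequent estimates.)

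\textbf{The removal of $\mu<N(q-1)/2$ and the closing argument.} The multiplier $(1+t)^\mu$ inside the derivative for $F_1$ was already used in \cite{Our}; it is not what lifts that restriction. The paper instead passes to $G(t):=(1+t)^{\mu/2}F(t)$ and derives the second-order inequality
\[
G''(t)+\frac{\mu(2-\mu)}{4(1+t)^2}G(t)\;\ge\; C\,\frac{G^q(t)}{(1+t)^{(N+\mu/2)(q-1)}},
\]
which is where the constraint disappears. There is no iteration: the bound $G_2\ge C\varepsilon$ is used \emph{once}, via $\int|u_t|^p\,dx\ge C\varepsilon^p(1+t)^{-(\mu p+(N-1)(p-2))/2}$, to seed a single polynomial lower bound on $G$. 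The blow-up and lifespan then follow from a Kato-type ODE argument on $G$ (multiply by $G'$, integrate, separate the cases $\mu\ge2$ and $\mu<2$ to handle the sign of the lower-order term), not from a coupled recursion between two functionals.
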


\begin{theo}
\label{th_u_t}
Let $\mu>0$. Assume that  $f\in H^1(\R^N)$ and $g\in L^2(\R^N)$ are non-negative functions which are compactly supported on  $B_{\R^N}(0,R)$,
and  do not vanish everywhere.
Let $u$ be an energy solution of \eqref{T-sys-bis} on $[0,T_\e)$ such that $\mbox{\rm supp}(u)\ \subset\{(x,t)\in\R^N\times[0,\infty): |x|\le t+R\}$. 
Then, there exists a constant $\e_0=\e_0(f,g,N,R,p,\mu)>0$
such that $T_\e$ verifies
\begin{displaymath}
T_\e \leq
\d \left\{
\begin{array}{ll}
 C \, \e^{-\frac{2(p-1)}{2-(N+\mu-1)(p-1)}}
 &
 \ \text{for} \
 1<p<p_G(N+\mu), \vspace{.1cm}
 \\
 \exp\left(C\e^{-(p-1)}\right)
&
 \ \text{for} \ p=p_G(N+\mu),
\end{array}
\right.
\end{displaymath}
 where $C$ is a positive constant independent of $\e$ and $0<\e\le\e_0$.
\end{theo}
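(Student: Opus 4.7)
The plan is to adapt the test-function argument developed for Theorem \ref{blowup} to the purely time-derivative case, following the philosophy of Tu-Lin and Palmieri-Tu but with a refined weight that produces the sharp shift $N\mapsto N+\mu$. I would first construct a positive test function $\Psi(x,t)=\rho(t)\varphi(x)$, where $\varphi(x)=\int_{S^{N-1}} e^{x\cdot\omega}\,\ud\omega$ satisfies $\Delta\varphi=\varphi$, and where $\rho(t)$ is a positive solution of the ODE associated with the damped equation, explicitly $\rho(t)=(1+t)^{\mu/2}K_{(\mu-1)/2}(1+t)$ via the modified Bessel function $K_\nu$. The asymptotics $\rho(t)\sim c(1+t)^{(\mu-1)/2}e^{-(1+t)}$ are precisely what produces the $+\mu$ correction to the dimension (as opposed to the $+2\mu$ shift obtained by cruder weights), provided we pair $\Psi$ with the forward-cone factor $e^t$ in the Hölder estimates.

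Next, I would introduce the functional
\[
F(t):=\int_{\R^N}u_t(x,t)\Psi(x,t)\,\ud x
\]
together with an auxiliary functional built from $u$ itself (to recover positivity after one integration by parts, in the spirit of \cite{Tu-Lin}). Differentiating $F$, using the weak formulation of \eqref{T-sys-bis}, and exploiting that $\Psi$ solves the adjoint linear problem modulo a controlled lower-order piece, I would obtain
\[
F'(t)\ \ge\ \int_{\R^N}|u_t(x,t)|^p\Psi(x,t)\,\ud x\ +\ (\text{manageable remainder}),
\]
the remainder being absorbable once $\e$ is small. A key intermediate step is a lower bound $F(t)\gtrsim \e$ for $t\ge t_0$, obtained from the positivity and non-triviality of $f,g$ together with the fact that $\Psi$ decays mildly on the light cone.

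The third step is the crucial Hölder estimate on the support cone $|x|\le t+R$. Splitting $|u_t|\Psi = |u_t|\Psi^{1/p}\cdot\Psi^{1-1/p}$ and applying Hölder in the spatial integral gives
\[
F(t)^p\ \le\ \Big(\int_{|x|\le t+R}|u_t|^p\Psi\,\ud x\Big)\,\Big(\int_{|x|\le t+R}\Psi\,\ud x\Big)^{p-1}.
\]
Using the sharp large-$t$ asymptotics of $\varphi$ inside the cone, one computes
\[
\int_{|x|\le t+R}\Psi(x,t)\,\ud x\ \lesssim\ (1+t)^{(N+\mu-1)/2},
\]
which is the place where the shifted Glassey exponent $p_G(N+\mu)$ is determined. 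Combining with the differential inequality yields
\[
F'(t)\ \ge\ C(1+t)^{-(N+\mu-1)(p-1)/2}\,F(t)^p,
\]
a Bernoulli-type ODI that blows up in finite time. Integrating it directly gives the lifespan bound $C\e^{-2(p-1)/(2-(N+\mu-1)(p-1))}$ in the subcritical range $1<p<p_G(N+\mu)$, while in the critical case $p=p_G(N+\mu)$ the integrating factor is logarithmic and yields the exponential lifespan $\exp(C\e^{-(p-1)})$.

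The main obstacle I anticipate is the bookkeeping needed to keep the $+\mu$ shift (rather than $+2\mu$): this forces me to use the Bessel-type weight $\rho(t)$ instead of a cruder power weight, and to verify that the remainder terms coming from $\Psi_t$ and from $\frac{\mu}{1+t}\Psi$ do not spoil the inequality. A technical subpoint is the rigorous justification of the use of the compactly supported test cutoffs of $\Psi$ inside Definition \ref{def1} (since $\Psi$ is smooth but not compactly supported), which is handled by standard approximation with cutoff functions and a limiting argument as in \cite{Our,Tu-Lin}. Once the inequality $F'(t)\ge C(1+t)^{-(N+\mu-1)(p-1)/2}F(t)^p$ is established, the blow-up and lifespan conclusions are routine.
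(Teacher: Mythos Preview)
Your overall architecture---the Bessel-weighted test function $\Psi=\rho\varphi$, the functional $F(t)=\int u_t\,\Psi$, the H\"older step on the cone yielding $\int_{|x|\le t+R}\Psi\lesssim (1+t)^{(N+\mu-1)/2}$, and the Bernoulli ODI leading to the stated lifespan---matches the paper's proof. The gap is in the step you describe as ``$F'(t)\ge\int|u_t|^p\Psi+(\text{manageable remainder})$, the remainder being absorbable once $\e$ is small.'' That claim is not correct, and it is precisely the place where the paper does extra work.

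Concretely, once you differentiate and use that $\Psi$ solves the adjoint equation exactly (not merely modulo lower order), the identity you obtain is
\[
G_2'(t)+\Bigl(\frac{\mu}{1+t}-\frac{\rho'(t)}{\rho(t)}\Bigr)G_2(t)\;=\;\int_{\R^N}|u_t|^p\Psi\,\ud x\;+\;G_1(t),
\]
where $G_1=\int u\,\Psi$ and the coefficient in front of $G_2$ tends to $1$ (not to $0$) as $t\to\infty$. Thus the ``remainder'' on the right of your inequality for $F'=G_2'$ contains a term $-(1+o(1))G_2$ which is of the \emph{same order} as $F$ itself and cannot be absorbed by taking $\e$ small. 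A direct ODI $F'\ge C(1+t)^{-\alpha}F^p$ is therefore not available from this relation.

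The paper's remedy is to rewrite the inequality as $G_2'+\tfrac{3}{4}\Gamma(t)G_2\ge\Sigma_1+\Sigma_2+\Sigma_3$ with $\Gamma=\mu/(1+t)-2\rho'/\rho$, and to observe that $\Sigma_1$ dominates not only a multiple of $\e$ but also $\tfrac14\int_0^t\!\!\int|u_t|^p\Psi$, thanks to the undifferentiated identity for $G_2+(\mu/(1+t)-\rho'/\rho)G_1$. One then introduces the time-integrated auxiliary
\[
H(t)=\frac{1}{8}\int_{T_*}^t\!\!\int_{\R^N}|u_t|^p\Psi\,\ud x\,\ud s+\frac{C_6}{8}\,\e,
\]
shows via a comparison argument for $\mathcal F:=G_2-H$ (namely $\mathcal F'+\tfrac{3}{4}\Gamma\,\mathcal F\ge 0$ together with $\mathcal F(T_*)\ge 0$ from the coercivity lemma $G_2\ge C_{G_2}\e$) that $G_2\ge H$, and finally derives the clean ODI for $H$:
\[
H'(t)\;\ge\;C\,(1+t)^{-\frac{(N+\mu-1)(p-1)}{2}}\,H(t)^p,\qquad H(T_*)\asymp\e.
\]
Everything downstream (subcritical polynomial bound, critical logarithmic bound) is then exactly as you wrote. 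So your plan is salvageable, but you must replace the ``absorbable remainder'' step by the $H$-functional and the comparison $G_2\ge H$; without that device the damping contributes an order-one dissipative term that blocks the direct ODI for $F$. (A minor side remark: the exponent in $\rho$ should be $(\mu+1)/2$, not $\mu/2$, if you want $\Psi$ to solve the adjoint equation exactly; with your normalization you would genuinely have a lower-order discrepancy to track.)
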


\begin{rem}
It is well-known that the assumption \eqref{1.5} with $p > p_G(N)$ and $q > q_S(N)$ still yields the blow-up of the corresponding undamped equation to \eqref{T-sys} (with $\mu=0$), see \cite{Han-Zhou,Hidano3,LT3}. More precisely, it is proven in \cite[Remark 2.3]{LT3} the existence of a pair $(p_0(N),q_0(N))$ verifying  \eqref{1.5}, $p_0(N) > p_G(N)$ and $q_0(N) > q_S(N)$. Consequently, for $\mu>0$, we have  $(p_0(N+\mu),q_0(N+\mu))$ which satisfy  \eqref{assump}, $p_0(N+\mu) > p_G(N+\mu)$ and $q_0(N+\mu) > q_S(N+\mu)$. Hence, the hypothesis on $p$ and $q$ in Theorem \ref{blowup} makes sense.
\end{rem}

\begin{rem}
Theorem \ref{th_u_t} asserts that the critical exponent for $p$ is greater than $p_G(N+\mu)$. We believe that the limiting value $p_G(N+\mu)$  coincides with the critical one. Of course one has to rigorously confirm this assertion. This will be the subject of a forthcoming work.
\end{rem}

\begin{rem}
Unlike the case with only one nonlinearity ($|u_t(x,s)|^p$ or $|u(x,s)|^q$), one can note, in addition to the two blow-up regions $p \le p_G(N)$ and $q \le q_S(N)$, the obtaining  of another blow-up region, characterized by \eqref{1.5} together with  $p>p_G(N)$ and $q>q_S(N)$.  The new  region is in fact due to the presence of two mixed nonlinearities in \eqref{T-sys}, see \cite{Hidano3} for the problem \eqref{T-sys} with $\mu=0$. This observation still holds in our case but with \eqref{1.5} being replaced by \eqref{assump}, otherwise $p_G(N)$ being replaced by $p_G(N+ \mu)$ and $q_S(N)$  by $q_S(N+\mu)$, for $\mu$ small. It was previously conjectured that  $q_S(N+\mu)$ constitutes the critical value for $q$ between the blow-up and the global existence zones, and Theorem \ref{blowup} gives a first assertion for this conjecture.   To complete the whole picture, the global existence in-time of the solution of \eqref{T-sys} will be studied in a subsequent work.
\end{rem}

\begin{rem}
The assumption \eqref{assump} can be seen as a smallness condition for $\mu$, namely
$\mu \in [0, \mu_*)$ where $\mu= \mu_*$ satisfies  the equality in \eqref{assump}  (otherwise $\mu_*:=\frac{2(q+1)}{p(q-1)} -N+1$). 
\end{rem}

\begin{rem}
Note that the results in 
Theorems \ref{blowup} and  \ref{th_u_t} hold true after replacing the linear damping term in \eqref{T-sys}  $\frac{\mu}{1+t} u_t$ by $b(t)u_t$ with $[b(t)-\mu (1+t)^{-1}]$ belongs to $L^1(0,\infty)$. The proof of this  generalized damping case can be obtained by following the same steps as in the proofs of Theorems \ref{blowup}  and  \ref{th_u_t} with the necessary modifications.
\end{rem}

\section{Some auxiliary results}\label{aux}
\par

We define the following positive test function  
\begin{equation}
\label{test11}
\psi(x,t):=\rho(t)\phi(x);
\quad
\phi(x):=
\left\{
\begin{array}{ll}
\d\int_{S^{N-1}}e^{x\cdot\omega}d\omega & \mbox{for}\ N\ge2,\vspace{.2cm}\\
e^x+e^{-x} & \mbox{for}\  N=1,
\end{array}
\right.
\end{equation}
where $\phi(x)$ is introduced in \cite{YZ06}   and $\rho(t)$, \cite{Palmieri1,Palmieri-Tu,Tu-Lin1,Tu-Lin},   is solution of 
\begin{equation}\label{lambda}
\frac{d^2 \rho(t)}{dt^2}-\rho(t)-\frac{d}{dt}\left(\frac{\mu}{1+t}\rho(t)\right)=0.
\end{equation}
The expression of  $\rho(t)$ reads as follows (see the Appendix for more details):
\begin{equation}\label{lmabdaK}
\rho(t)=(t+1)^{\frac{\mu+1}{2}}K_{\frac{\mu-1}2}(t+1),
\end{equation}
where 
$$K_{\nu}(t)=\int_0^\infty\exp(-t\cosh \zeta)\cosh(\nu \zeta)d\zeta,\ \nu\in \mathbb{R}.$$
Moreover, the function $\phi(x)$ verifies
\begin{equation*}\label{phi-pp}
\Delta\phi=\phi.
\end{equation*}
Note that the function $\psi(x, t)$ satisfies the conjugate equation corresponding to \eqref{1.2}, namely we have
\begin{equation}\label{lambda-eq}
\partial^2_t \psi(x, t)-\Delta \psi(x, t) -\frac{\partial}{\partial t}\left(\frac{\mu}{1+t}\psi(x, t)\right)=0.
\end{equation}

Throughout this article, we will denote by $C$  a generic positive constant which may depend on the data ($p,q,\mu,N,R,f,g$) but not on $\ep$ and whose the value may change from line to line. Nevertheless, we will precise the dependence of the constant $C$ on the parameters of the problem when it is necessary.\\

The following lemma  holds true for the function $\psi(x, t)$.
\begin{lem}[\cite{YZ06}]
\label{lem1} Let  $r>1$.
There exists a constant $C=C(N,R,p,r)>0$ such that
\begin{equation}
\label{psi}
\int_{|x|\leq t+R}\Big(\psi(x,t)\Big)^{r}dx
\leq C\rho^r(t)e^{rt}(1+t)^{\frac{(2-r)(N-1)}{2}},
\quad\forall \ t\ge0.
\end{equation}
\end{lem}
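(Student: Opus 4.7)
The plan is to reduce the inequality to a one-dimensional integral by separating variables, use the known pointwise asymptotics of $\phi(x)$, and then bound the resulting radial integral by a Laplace-type argument.

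First, since $\psi(x,t) = \rho(t)\phi(x)$, I would factor out $\rho(t)^r$ and reduce the claim to establishing
\[
\int_{|x|\le t+R} \phi(x)^r\, dx \;\le\; C\, e^{rt}(1+t)^{(2-r)(N-1)/2}.
\]
For $N\ge 2$, the integral $\phi(x)=\int_{S^{N-1}} e^{x\cdot\omega}\,d\omega$ can be analyzed by a Laplace-type asymptotic on the sphere, where the maximum of $x\cdot\omega$ is attained at $\omega=x/|x|$ and the Hessian in the tangential directions contributes the factor $|x|^{-(N-1)/2}$. This yields the well-known bound
\[
\phi(x)\;\le\; C\,(1+|x|)^{-(N-1)/2}\, e^{|x|}\quad\text{for all } x\in\R^N,
\]
with $\phi$ uniformly bounded on any compact set to handle the origin. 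For $N=1$, $\phi(x)=e^x+e^{-x}\le 2\, e^{|x|}$, which corresponds to the same bound with the trivial exponent $0$.

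Next I would pass to polar coordinates and substitute the pointwise bound:
\[
\int_{|x|\le t+R}\phi(x)^r\, dx\;\le\; C\int_0^{t+R}s^{N-1}(1+s)^{-r(N-1)/2}\,e^{rs}\, ds.
\]
I would split the radial integral at $s=1$: on $[0,1]$ it contributes a harmless $O(1)$ constant, while on $[1,t+R]$ one uses $s^{N-1}(1+s)^{-r(N-1)/2}\le C(1+s)^{(2-r)(N-1)/2}$ and then applies the elementary Laplace-type estimate
\[
\int_1^{t+R}(1+s)^{\alpha}\,e^{rs}\,ds\;\le\; \tfrac{C}{r}\,(1+t)^{\alpha}\, e^{r(t+R)},
\]
valid for any real exponent $\alpha$; this is immediate by integration by parts when $\alpha\ge 0$, and by monotonicity of $(1+s)^\alpha$ together with a dominant exponential factor when $\alpha<0$. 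Absorbing the constant $e^{rR}$ into $C$ gives the claim.

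The main subtlety I expect is the case $r>2$, where the exponent $(2-r)(N-1)/2$ is negative and the naive integrand $s^{N-1}(1+s)^{-r(N-1)/2}$ is not monotonic; this is why splitting the integral at $s=1$ (rather than comparing globally) and using the boundedness of $\phi$ near the origin is essential. Once that bookkeeping is done, the rest is routine, and the constant $C$ depends only on $N,R,r$ (and implicitly on $p$ through the parameter range the lemma is applied in).
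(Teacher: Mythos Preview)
Your proposal is correct and is precisely the standard Yordanov--Zhang argument: factor out $\rho(t)^r$, use the pointwise bound $\phi(x)\le C(1+|x|)^{-(N-1)/2}e^{|x|}$, and estimate the resulting radial integral by a Laplace-type bound. The paper itself does not give a proof of this lemma---it simply cites \cite{YZ06}, where exactly this argument is carried out (for the test function $e^{-t}\phi(x)$, from which the present statement follows immediately since $\psi(x,t)=\rho(t)\phi(x)$ and $\rho(t)^r$ factors out).
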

\par
As in the non-perturbed case, we define here the functionals that we will use to prove the blow-up criteria later on:
\begin{equation}
\label{F1def}
G_1(t):=\int_{\R^N}u(x, t)\psi(x, t)dx,
\end{equation}
and
\begin{equation}
\label{F2def}
G_2(t):=\int_{\R^N}\p_tu(x, t)\psi(x, t)dx.
\end{equation}
The next two lemmas give the first  lower bounds for $G_1(t)$ and $G_2(t)$, respectively. More precisely, we will prove that $G_1(t)$ and $G_2(t)$ are two {\it coercive} functions. This is the first observation which will be used to improve the main results of this article. Indeed, in our previous work \cite{Our}, and compared to the results in \eqref{F1postive} and \eqref{F2postive} below, we obtained weaker lower bounds for the functionals, $G_1(t)$ and $G_2(t)$, of size $\e / (1+t)^{\mu/2}$ (instead of $\e$ here); see Lemmas 3.2 and 3.3 in \cite{Our}.

We note here that the proof of Lemma \ref{F1} is known in the literature; see e.g. \cite{Palmieri1,Tu-Lin1,Tu-Lin}. However, we choose to include all the details about the proof of this lemma, on the one hand, to make the article self-contained and, on the other hand,  to make later use of some computations therein. Nevertheless, Lemma \ref{F11} constitutes a novelty in this work and its utilization in the proofs of Theorems \ref{blowup}  and  \ref{th_u_t} is fundamental.  
\begin{lem}
\label{F1}
Assume the existence of an energy solution $u$ of the system \eqref{T-sys}  with initial data satisfying the assumptions in Theorem \ref{blowup}. Then, there exists $T_0=T_0(\mu)>1$ such that 
\begin{equation}
\label{F1postive}
G_1(t)\ge C_{G_1}\, \e, 
\quad\text{for all}\ t \ge T_0,
\end{equation}
where $C_{G_1}$ is a positive constant which depends on $f$, $g$, $N,R$ and $\mu$.
\end{lem}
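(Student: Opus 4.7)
The plan is to build a first-order linear ODE for $G_1(t)$ by pairing the equation \eqref{T-sys} against the test function $\psi$ that solves the conjugate equation \eqref{lambda-eq}, and then to read off the lower bound from the explicit integrating factor together with the large-time asymptotics of the modified Bessel function $K_{(\mu-1)/2}$.

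The starting step is to multiply the PDE by $\psi$ and the conjugate equation \eqref{lambda-eq} by $u$, integrate over $\R^N$, and subtract. The Laplacian terms cancel after one integration by parts in $x$ (using the compact support of $u$), and a direct computation shows that the damping contributions assemble into a total $t$-derivative, yielding
\begin{equation*}
\frac{d}{dt}H(t)=\int_{\R^N}\bigl(|\p_t u|^p+|u|^q\bigr)\psi\,dx\ge 0,\qquad H(t):=\int_{\R^N}\Bigl[\p_tu\,\psi-u\,\p_t\psi+\frac{\mu}{1+t}u\,\psi\Bigr]dx.
\end{equation*}
Thus $H$ is nondecreasing. Using $\p_t\psi=(\rho'/\rho)\,\psi$, the definition of $H$ rewrites as the first-order linear ODE
\begin{equation*}
G_1'(t)+m(t)\,G_1(t)=H(t),\qquad m(t):=\frac{\mu}{1+t}-2\,\frac{\rho'(t)}{\rho(t)},
\end{equation*}
with explicit integrating factor $M(t)=\exp\bigl(\int_0^t m(s)\,ds\bigr)=(1+t)^{\mu}\rho(0)^2/\rho(t)^2$.

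The next step is to show $H(0)\ge \tilde C\,\e$ for a positive constant $\tilde C=\tilde C(f,g,\mu)$. Plugging in $\psi(x,0)=\rho(0)\phi(x)$ and $\p_t\psi(x,0)=\rho'(0)\phi(x)$ gives
\begin{equation*}
H(0)=\e\int_{\R^N}\phi(x)\bigl[\rho(0)\,g(x)+(\mu\rho(0)-\rho'(0))\,f(x)\bigr]dx,
\end{equation*}
and the Bessel recurrence $K_{\nu-1}(z)-K_{\nu+1}(z)=-(2\nu/z)K_\nu(z)$ applied to \eqref{lmabdaK} yields $\rho(0)=K_{(\mu-1)/2}(1)>0$ and $\mu\rho(0)-\rho'(0)=K_{(\mu+1)/2}(1)>0$; since $\phi>0$ and $f,g\ge 0$ do not vanish identically, $H(0)>0$ follows with quantitative lower bound $\tilde C\e$. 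Integrating the ODE with $G_1(0)=\e\rho(0)\int f\phi\,dx\ge 0$ and using the monotonicity of $H$ then gives $G_1(t)\ge \tilde C\e\,M(t)^{-1}\!\int_0^t M(s)\,ds$.

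The closing step is asymptotic: from $K_\nu(z)\sim\sqrt{\pi/(2z)}\,e^{-z}$ as $z\to+\infty$ one gets $\rho(t)\sim C(1+t)^{\mu/2}e^{-(1+t)}$, so $M(s)\sim C_1 e^{2(1+s)}$ and $M(t)^{-1}\sim C_2 e^{-2(1+t)}$; hence $M(t)^{-1}\!\int_0^t M(s)\,ds$ converges to a strictly positive limit, so there exists $T_0=T_0(\mu)>1$ beyond which this ratio is bounded below by a positive constant, giving $G_1(t)\ge C_{G_1}\,\e$. The main technical point is the sign check on $H(0)$ via the Bessel recurrences; the remainder is a routine integration of a first-order linear ODE combined with the standard large-argument asymptotics of $K_\nu$.
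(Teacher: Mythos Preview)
Your argument is correct and follows essentially the same route as the paper's proof: the identity $H'(t)\ge 0$ you derive is precisely the differentiated form of the paper's equation \eqref{eq5}, your $m(t)$ coincides with the paper's $\Gamma(t)$, the integrating factor $(1+t)^\mu/\rho(t)^2$ is identical, and the concluding asymptotic step via $K_\nu(z)\sim\sqrt{\pi/(2z)}\,e^{-z}$ matches the paper's use of \eqref{est-double}. The only cosmetic differences are that the paper starts from the weak formulation \eqref{energysol2} rather than formally multiplying the PDE by $\psi$, and it verifies the positivity of $C(f,g)$ (your $H(0)/\e$) through the identity \eqref{lambda'lambda} rather than the Bessel recurrence you invoke---both yield $\mu\rho(0)-\rho'(0)=K_{(\mu+1)/2}(1)>0$.
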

\begin{proof} 
Let $t \in [0,T)$.  Using Definition \ref{def1} and  performing an integration by parts in space in the fourth term in the left-hand side of \eqref{energysol2}, we obtain
\begin{equation}\label{eq3}
\begin{array}{l}
\d \int_{\R^N}u_t(x,t)\Phi(x,t)dx
-\e\int_{\R^N}g(x)\Phi(x,0)dx \vspace{.2cm}\\
\d-\int_0^t\int_{\R^N}\left\{
u_t(x,s)\Phi_t(x,s)+u(x,s)\Delta\Phi(x,s)\right\}dx \, ds +\int_0^t  \int_{\R^N}\frac{\mu}{1+s}u_t(x,s) \Phi(x,s)dx \,ds\vspace{.2cm}\\
\d=\int_0^t\int_{\R^N}\left\{|u_t(x,s)|^p+|u(x,s)|^q\right\}\Phi(x,s)dx \, ds, \quad \forall \ \Phi\in \mathcal{C}_0^{\infty}(\R^N\times[0,T)).
\end{array}
\end{equation}
Now, substituting in \eqref{eq3} $\Phi(x, t)$ by $\psi(x, t)$, we infer that
\begin{equation}
\begin{array}{l}\label{eq4}
\d \int_{\R^N}u_t(x,t)\psi(x,t)dx
-\e\int_{\R^N}g(x)\psi(x,0)dx \vspace{.2cm}\\
\d-\int_0^t\int_{\R^N}\left\{
u_t(x,s)\psi_t(x,s)+u(x,s)\Delta\psi(x,s)\right\}dx \, ds +\int_0^t  \int_{\R^N}\frac{\mu}{1+s}u_t(x,s) \psi(x,s)dx \,ds\vspace{.2cm}\\
\d=\int_0^t\int_{\R^N}\left\{|u_t(x,s)|^p+|u(x,s)|^q\right\}\psi(x,s)dx \, ds.
\end{array}
\end{equation}
Performing an integration by parts for the first and third terms in the second line of \eqref{eq4} and utilizing \eqref{test11} and \eqref{lambda-eq}, we obtain
\begin{equation}
\begin{array}{l}\label{eq5}
\d \int_{\R^N}\big[u_t(x,t)\psi(x,t)- u(x,t)\psi_t(x,t)+\frac{\mu}{1+t}u(x,t) \psi(x,t)\big]dx
\vspace{.2cm}\\
\d=\int_0^t\int_{\R^N}\left\{|u_t(x,s)|^p+|u(x,s)|^q\right\}\psi(x,s)dx \, ds 
+\d \e \, C(f,g),
\end{array}
\end{equation}
where 
\begin{equation}\label{Cfg}
C(f,g):=\rho(0)\int_{\R^N}\big[\big(\mu-\frac{\rho'(0)}{\rho(0)}\big)f(x)\phi(x)+g(x)\phi(x)\big]dx.
\end{equation}
We notice  that the constant $C(f,g)$ is positive thanks to \eqref{lambda'lambda} and the fact that the function $K_{\nu}(t)$ is positive (see \eqref{Kmu} in the Appendix). \\
Hence, using the definition of $G_1$, as in \eqref{F1def},  and \eqref{test11}, the equation  \eqref{eq5} yields
\begin{equation}
\begin{array}{l}\label{eq6}
\d G_1'(t)+\Gamma(t)G_1(t)=\int_0^t\int_{\R^N}\left\{|u_t(x,s)|^p+|u(x,s)|^q\right\}\psi(x,s)dx \, ds +\e \, C(f,g),
\end{array}
\end{equation}
where 
\begin{equation}\label{gamma}
\Gamma(t):=\frac{\mu}{1+t}-2\frac{\rho'(t)}{\rho(t)}.
\end{equation}
Multiplying  \eqref{eq6} by $\frac{(1+t)^\mu}{\rho^2(t)}$ and integrating over $(0,t)$, we deduce  that
\begin{align}\label{est-G1}
 G_1(t)
\ge G_1(0)\frac{\rho^2(t)}{(1+t)^\mu}+{\e}C(f,g)\frac{\rho^2(t)}{(1+t)^\mu}\int_0^t\frac{(1+s)^\mu}{\rho^2(s)}ds.
\end{align}
Using \eqref{lmabdaK} and  the fact that $G_1(0)>0$, the estimate \eqref{est-G1} yields
\begin{align}\label{est-G1-1}
 G_1(t)
\ge {\e}C(f,g)(1+t)K^2_{\frac{\mu-1}2}(t+1)\int^t_{t/2}\frac{1}{(1+s)K^2_{\frac{\mu-1}2}(s+1)}ds.
\end{align}
From \eqref{Kmu}, we have the existence of $T_0=T_0(\mu)>1$ such that 
\begin{align}\label{est-double}
(1+t)K^2_{\frac{\mu-1}2}(t+1)>\frac{\pi}{4} e^{-2(t+1)} \quad \text{and}  \quad (1+t)^{-1}K^{-2}_{\frac{\mu-1}2}(t+1)>\frac{1}{\pi} e^{2(t+1)}, \ \forall \ t \ge T_0/2.
\end{align}
Hence, we have
\begin{align}\label{est-G1-2}
 G_1(t)
\ge \frac{\e}{4}C(f,g)e^{-2t}\int^t_{t/2}e^{2s}ds\ge \frac{\e}{8}C(f,g)e^{-2t}(e^{2t}-e^{t}), \ \forall \ t \ge T_0.
\end{align}
Finally, using $e^{2t}>2e^{t}, \forall \ t \ge 1$, we deduce that
\begin{align}\label{est-G1-3}
 G_1(t)
\ge \frac{\e}{16}C(f,g), \ \forall \ t \ge T_0.
\end{align}

This ends the proof of Lemma \ref{F1}.
\end{proof}

Now we are in a position to prove  the following lemma.
\begin{lem}\label{F11}
For any energy solution $u$ of the system \eqref{T-sys} with initial data satisfying the assumptions in Theorem \ref{blowup},  there exists $T_1=T_1(\mu)>0$ such that 
\begin{equation}
\label{F2postive}
G_2(t)\ge C_{G_2}\, \e, 
\quad\text{for all}\ t  \ge  T_1,
\end{equation}
where $C_{G_2}$ is a positive constant which depends on $f$, $g$, $N$ and $\mu$.
\end{lem}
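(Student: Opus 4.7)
The plan is to derive an ODE for $G_2(t)$ from identity \eqref{eq5}, solve it by integrating factor, and reduce the lower bound on $G_2$ to an estimate purely in terms of the Bessel function $K_{(\mu-1)/2}$ controlled by \eqref{est-double}. Starting from \eqref{eq5} and using $\psi_t=(\rho'(t)/\rho(t))\psi$, one rewrites the identity as
$$G_2(t)+\Bigl(\tfrac{\mu}{1+t}-\tfrac{\rho'(t)}{\rho(t)}\Bigr)G_1(t)=\int_0^t\!\!\int_{\R^N}\{|u_t|^p+|u|^q\}\psi\,dx\,ds+\e\,C(f,g).$$
Differentiating in $t$, and using the direct relation $G_1'(t)=G_2(t)+(\rho'(t)/\rho(t))G_1(t)$ together with the identity \eqref{lambda} to evaluate $(\rho'/\rho)'$, a short algebraic computation (the coefficient of $G_1$ collapses to $-1$) produces the ODE
$$G_2'(t)+\Bigl(\tfrac{\mu}{1+t}-\tfrac{\rho'(t)}{\rho(t)}\Bigr)G_2(t)=G_1(t)+\int_{\R^N}\{|u_t|^p+|u|^q\}\psi\,dx.$$

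Multiplying by the integrating factor $(1+t)^\mu/\rho(t)$ turns the left-hand side into an exact derivative:
$$\frac{d}{dt}\!\left[\frac{(1+t)^\mu}{\rho(t)}G_2(t)\right]=\frac{(1+t)^\mu}{\rho(t)}\!\left[G_1(t)+\int_{\R^N}\{|u_t|^p+|u|^q\}\psi\,dx\right].$$
Integrating from $0$ to $t$ and discarding the nonnegative initial contribution $G_2(0)=\e\rho(0)\int g\phi\,dx\ge 0$ and the nonnegative nonlinear integral on the right yields
$$G_2(t)\ge \frac{\rho(t)}{(1+t)^\mu}\int_0^t\frac{(1+s)^\mu}{\rho(s)}\,G_1(s)\,ds.$$
For $t\ge 2T_0$, restrict the integration to $[T_0,t]$ and invoke Lemma \ref{F1} to bound $G_1(s)\ge C_{G_1}\e$ on that range. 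This reduces the problem to proving a \emph{uniform} (i.e.\ $t$-independent) lower bound for the purely Bessel-type quantity
$$\frac{\rho(t)}{(1+t)^\mu}\int_{T_0}^{t}\frac{(1+s)^\mu}{\rho(s)}\,ds=(1+t)^{\frac{1-\mu}{2}}K_{\frac{\mu-1}{2}}(1+t)\int_{T_0}^{t}\frac{(1+s)^{\frac{\mu-1}{2}}}{K_{\frac{\mu-1}{2}}(1+s)}\,ds,$$
after substituting the expression \eqref{lmabdaK} for $\rho$.

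To handle the last display, I would apply the two-sided exponential bounds on $K_{(\mu-1)/2}$ that underlie \eqref{est-double}: the prefactor is bounded below by a multiple of $(1+t)^{-\mu/2}e^{-(t+1)}$, while the integrand in $s$ is bounded below by a multiple of $(1+s)^{\mu/2}e^{s+1}$. Restricting the integration to $s\in[t/2,t]$, the polynomial factor $(1+s)^{\mu/2}$ (monotone increasing in $s$ since $\mu>0$) can be pulled out at $s=t/2$, and the exponential integral gives at least $\tfrac12 e^{t+1}$ for $t$ large. The cancellations $(1+t)^{-\mu/2}\cdot(1+t/2)^{\mu/2}$ and $e^{-(t+1)}\cdot e^{t+1}$ leave a strictly positive limit, which furnishes a constant $c>0$ independent of $t$; composing with the $C_{G_1}\e$ factor gives $G_2(t)\ge C_{G_2}\e$ for every $t\ge T_1$, with $T_1$ chosen large enough that all the asymptotic estimates apply.

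The main obstacle will be the bookkeeping with the Bessel bounds in the final step: the prefactor $\rho(t)/(1+t)^\mu$ decays exponentially while the integrand $(1+s)^\mu/\rho(s)$ grows exponentially, so the positive lower bound emerges only through the precise cancellation of both the exponential and polynomial factors. Getting this cancellation correct—and in particular ensuring that the resulting constant depends only on $(f,g,N,\mu)$ and not on $t$—is where care is required; everything else is a routine consequence of the identity \eqref{eq5} and the ODE technique already used in Lemma \ref{F1}.
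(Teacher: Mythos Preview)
Your argument is correct, and it is in fact cleaner than the paper's own proof. The paper, after reaching the same identity
\[
G_2'(t)+\Bigl(\tfrac{\mu}{1+t}-\tfrac{\rho'(t)}{\rho(t)}\Bigr)G_2(t)-G_1(t)\ge 0,
\]
does not apply the natural integrating factor $(1+t)^\mu/\rho(t)$ directly. Instead it rewrites the left side as $G_2'+\tfrac{3}{4}\Gamma(t)G_2$ and pushes the remainder into two auxiliary terms $\Sigma_1,\Sigma_2$ (one controlled by \eqref{eq5bis}, the other by Lemma~\ref{F1}), then integrates with the factor $(1+t)^{3\mu/4}/\rho^{3/2}(t)$ from a time $\tilde T_2$, which forces an appeal to the external fact $G_2(\tilde T_2)\ge 0$ from \cite[Lemma~3.3]{Our}. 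Your route avoids this detour entirely: by integrating from $0$ you only need $G_2(0)=\e\rho(0)\int g\phi\ge0$, which is immediate from $g\ge0$. The Bessel bookkeeping at the end is the same in both proofs. What the paper's decomposition buys is reusability: the $\Sigma_1,\Sigma_2,\Sigma_3$ splitting is recycled verbatim in Section~\ref{sec-ut}, where keeping track of the nonlinear term inside $\Sigma_1$ is essential.

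One small point to make explicit: when you restrict the integral $\int_0^t(\cdot)G_1(s)\,ds$ to $[T_0,t]$, you are implicitly discarding a nonnegative contribution on $[0,T_0]$, i.e.\ using $G_1(s)\ge0$ for all $s\ge0$. This is true---it follows at once from estimate \eqref{est-G1} in the proof of Lemma~\ref{F1}, since both terms on its right-hand side are positive---but you should say so.
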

 
\begin{proof}
Let $t \in [0,T)$. Then, using the definition of $G_1$ and  $G_2$, given respectively by \eqref{F1def} and  \eqref{F2def}, \eqref{test11} and the fact that
 \begin{equation}\label{def23}\d G_1'(t) -\frac{\rho'(t)}{\rho(t)}G_1(t)= G_2(t),\end{equation}
 the equation  \eqref{eq6} yields
\begin{equation}
\begin{array}{l}\label{eq5bis}
\d G_2(t)+\left(\frac{\mu}{1+t}-\frac{\rho'(t)}{\rho(t)}\right)G_1(t)\\
=\d \int_0^t\int_{\R^N}\left\{|u_t(x,s)|^p+|u(x,s)|^q\right\}\psi(x,s)dx \, ds +\e \, C(f,g).
\end{array}
\end{equation}
Differentiating the  equation \eqref{eq5bis} in time and ignoring the nonnegative term in the right-hand side of the obtained equation, we get
\begin{align}\label{F1+bis}
\d G_2'(t)+\left(\frac{\mu}{1+t}-\frac{\rho'(t)}{\rho(t)}\right)G'_1(t)-\left(\frac{\mu}{(1+t)^2}+\frac{\rho''(t)\rho(t)-(\rho'(t))^2}{\rho^2(t)}\right)G_1(t) \ge 0.
\end{align}
Using  \eqref{lambda} and   \eqref{def23}, the identity \eqref{F1+bis} becomes
\begin{align}\label{F1+bis2}
\d G_2'(t)+\left(\frac{\mu}{1+t}-\frac{\rho'(t)}{\rho(t)}\right)G_2(t)-G_1(t) \ge 0.
\end{align}
Remember the definition of $\Gamma(t)$, given by \eqref{gamma}, we obtain 
\begin{equation}\label{G2+bis3}
\begin{array}{c}
\d G_2'(t)+\frac{3\Gamma(t)}{4}G_2(t)\ge\Sigma_1(t)+\Sigma_2(t),
\end{array}
\end{equation}
where 
\begin{equation}\label{sigma1-exp}
\Sigma_1(t)=\d \left(-\frac{\rho'(t)}{2\rho(t)}-\frac{\mu}{4(1+t)}\right)\left(G_2(t)+\left(\frac{\mu}{1+t}-\frac{\rho'(t)}{\rho(t)}\right)G_1(t)\right),
\end{equation}
and
\begin{equation}\label{sigma2-exp}
\Sigma_2(t)=\d \left(1+\left(\frac{\rho'(t)}{2\rho(t)}+\frac{\mu}{4(1+t)}\right) \left(\frac{\mu}{1+t}-\frac{\rho'(t)}{\rho(t)}\right) \right)  G_1(t).
\end{equation}

Now, using \eqref{eq5bis} and \eqref{lambda'lambda1}, we deduce that there exists $\tilde{T}_1=\tilde{T}_1(\mu) \ge T_0$ such that
\begin{equation}\label{sigma1}
\d \Sigma_1(t) \ge C \, \e, \quad \forall \ t \ge \tilde{T}_1. 
\end{equation}
Moreover, form Lemma \ref{F1} and \eqref{lambda'lambda1}, we conclude the existence of $\tilde{T}_2=\tilde{T}_2(\mu) \ge \tilde{T}_1(\mu)$ such that
\begin{equation}\label{sigma2}
\d \Sigma_2(t) \ge C \, \e, \quad \forall \ t  \ge  \tilde{T}_2. 
\end{equation}
Combining \eqref{G2+bis3}, \eqref{sigma1} and \eqref{sigma2}, we obtain
\begin{equation}\label{G2+bis4}
\begin{array}{c}
\d G_2'(t)+\frac{3\Gamma(t)}{4}G_2(t)\ge C \, \e, \quad \forall \ t  \ge  \tilde{T}_2.
\end{array}
\end{equation}
Multiplying  \eqref{G2+bis4} by $\frac{(1+t)^{3\mu/4}}{\rho^{3/2}(t)}$ and integrating over $(\tilde{T}_2,t)$, we deduce  that
\begin{align}\label{est-G111}
 G_2(t)
\ge G_2(\tilde{T}_2)\frac{\rho^{3/2}(t)(1+\tilde{T}_2)^{3\mu/4}}{\rho^{3/2}(\tilde{T}_2)(1+t)^{3\mu/4}}+C\,{\e}\frac{\rho^{3/2}(t)}{(1+t)^{3\mu/4}}\int_{\tilde{T}_2}^t\frac{(1+s)^{3\mu/4}}{\rho^{3/2}(s)}ds, \quad \forall \ t  \ge  \tilde{T}_2.
\end{align}
Now, observe that $G_2(t)=\rho(t)e^{t}F_2(t)$ where $F_2(t)$ is given by (3.4)  in \cite{Our}. Hence, using \cite[Lemma 3.3]{Our}\footnote{ The same arguments in the beginning of Remark \ref{rem3.2} hold true for the functional $F_2(t)$, given by (3.4)  in \cite{Our}, which is associated with only one derivative nonlinearity in the present work.} we infer that $G_2(t) \ge 0$ for all $t \ge 0$.\\
Therefore, using the above observation and  \eqref{lmabdaK}, we deduce that 
\begin{align}\label{est-G1111}
 G_2(\tilde{T}_2)\frac{\rho^{3/2}(t)(1+\tilde{T}_2)^{3\mu/4}}{\rho^{3/2}(\tilde{T}_2)(1+t)^{3\mu/4}}
\ge 0, \quad \forall \ t \ge 0.
\end{align}
Employing \eqref{est-double} and \eqref{est-G1111}, the estimate \eqref{est-G111} yields, for all $t \ge 2\tilde{T}_2$,
\begin{equation}\label{est-G2-12}
 G_2(t)
\ge   C\,{\e}e^{-3t/2} \int^t_{t/2}e^{3s/2}ds.
\end{equation}
Hence, we have
\begin{align}\label{est-G1-2}
 G_2(t)
\ge  C\,{\e}, \quad \forall \ t \ge T_1:=2\tilde{T}_2.
\end{align}

 This concludes the proof of Lemma
\ref{F11}.
\end{proof}

\begin{rem}\label{rem3.1}
Notice that in the proofs of Lemmas \ref{F1} and \ref{F11} we only used the positivity of each one of the two nonlinearities ($|u_t|^p$ and $|u|^q$). Indeed, the results in these lemmas are based on the comprehension of the dynamics in the linear part and, thus, the same conclusions can be handled similarly for any positive nonlinearity of the form $F(u,u_t)$ instead of $|u_t|^p+|u|^q$. 
\end{rem}

\begin{rem}\label{rem3.2}
Obviously,  the  results of Lemmas \ref{F1} and \ref{F11} naturally hold true when we consider one nonlinearity $|u_t|^p$ or $|u|^q$ as it is the case for  (\ref{T-sys-bis}). However,  to prove Theorem \ref{th_u_t}, and due to the nature of the equation, we make use of the linear part of (\ref{T-sys-bis}) together with the nonlinear term while estimating $G_2$. The approach used to estimate $G_2$ in the proof of Theorem \ref{th_u_t} is inspired from the computations carried out in \cite{LT2} for $F_2(t)=G_2(t) /(e^{t} \rho(t))$.
\end{rem}
\section{Proof of Theorem \ref{blowup}}\label{proof}
\par\quad
In this section, we will give the proof of the first (main) theorem in this article which states the blow-up result and the  lifespan estimate of the solution of (\ref{T-sys}). For that purpose, we will make use of the lemmas proven in Section \ref{aux} and a Kato's lemma type.

First, using the hypotheses in Theorem \ref{blowup}, we recall that $\mbox{\rm supp}(u)\ \subset\{(x,t)\in\R^N\times[0,\infty): |x|\le t+R\}$. \\
Let $t \in [0,T)$. Then, we set
\begin{equation}
F(t):=\int_{\R^N}u(x,t)dx,
\end{equation}
and 
\begin{equation}
\label{G-exp}
G(t):=\zeta(t)F(t) \ \text{with} \ \d \zeta(t)=(1+t)^{\frac{\mu}{2}}.
\end{equation}
Now, by choosing the test function $\Phi$ in \eqref{energysol2} such that
$\Phi\equiv 1$ in $\{(x,s)\in \R^N\times[0,t]:|x|\le s+R\}$\footnote{ The choice of a test function $\Phi$  which is identically equal to $1$ is possible thanks to the fact that the initial data $f$ and $g$ are supported on $B_{\R^N}(0,R)$.}, we get
\begin{equation}
\label{energysol1}
\begin{array}{r}
\d \d\int_{\R^N}u_t(x,t)dx-\int_{\R^N}u_t(x,0)dx + \int_0^t  \frac{\mu}{1+s}\int_{\R^N}u_t(x,s)dx ds
\\=\d \int_0^t  \int_{\R^N}\left\{|u_t(x,s)|^p+|u(x,s)|^q\right\}dx \,ds.
\end{array}
\end{equation}
Using the definition of $F(t)$, the equation \eqref{energysol1} can be written as
\begin{equation}
\label{F'0ineq12}
F'(t)+ \int_0^t  \frac{\mu}{1+s}F'(s) ds= F'(0)+ \int_0^t \int_{\R^N}\left\{|u_t(x,s)|^p+|u(x,s)|^q\right\}dx \,ds.
\end{equation}
Differentiating \eqref{F'0ineq12} in time, we obtain
\begin{equation}
\label{F'0ineq1-bis}
F''(t)+  \frac{\mu}{1+t}F'(t) = \int_{\R^N}\left\{|u_t(x,t)|^p+|u(x,t)|^q\right\}dx.
\end{equation}
Now, we introduce the following multiplier
\begin{equation}
\label{test1}
\begin{aligned}
m(t):=(1+t)^{\mu}. 
\end{aligned}
\end{equation}
Multiplying \eqref{F'0ineq1-bis} by $m(t)$ and integrating over $(0,t)$, we infer that
\begin{equation}
\label{F'0ineq11}
m(t)F'(t)= F'(0)+ \int_0^t m(s)\int_{\R^N}\left\{|u_t(x,s)|^p+|u(x,s)|^q\right\}dx \,ds.
\end{equation}
Therefore, by dividing \eqref{F'0ineq11} by $m(t)$, integrating over $(0,t)$  and using the positivity of $ F(0)$ and $F'(0)$, we deduce that
\begin{align}
\label{F'0ineqmmint}
F(t)\geq \int_0^t\frac{1}{m(s)} \int_0^sm(\tau ) \int_{\R^N}\left\{|u_t(x,\tau)|^p+|u(x,\tau)|^q\right\} dx \,d\tau\,ds.
\end{align}
By H\"{o}lder's inequality and the estimates \eqref{psi} and \eqref{F2postive}, we can bound the nonlinear term as follows:
\begin{equation}
\begin{array}{rcl}
\d \int_{\R^N}|u_t(x,t)|^pdx &\geq& \d G_2^p(t)\left(\int_{|x|\leq t+R}\Big(\psi(x,t)\Big)^{\frac{p}{p-1}}dx\right)^{-(p-1)} \vspace{.2cm}\\  &\geq&  C\rho^{-p}(t)e^{-pt}\e^p(1+t)^{ -\frac{(N-1)(p-2)}2}, \quad \forall \ t \ge T_1.
\end{array}
\end{equation}
Using \eqref{lmabdaK} and \eqref{est-double}, we get
 \begin{equation}\label{pho-est}
 \d \rho(t)e^{t} \le C (1+t)^{\frac{\mu}{2}}, \ \forall \ t \ge T_0/2.
 \end{equation}
Consequently, we have
\begin{equation}
\d \int_{\R^N}|u_t(x,t)|^pdx \geq C \e^p(1+t)^{ -\frac{\mu p+(N-1)(p-2)}2}, \ \forall \ t \ge T_1.\\
\end{equation}
Plugging the above inequality  into \eqref{F'0ineqmmint}, we obtain
\begin{equation}
\begin{aligned}\label{F0first}
F(t)
&\geq C\e^p (1+t)^{2 -\frac{\mu p+(N-1)(p-2)}2}, \ \forall \ t \ge T_1.
\end{aligned}
\end{equation}
Hence, by \eqref{G-exp}, the estimate \eqref{F0first} implies that
\begin{equation}
\begin{aligned}\label{F0first-1}
G(t)
&\geq C\e^p (1+t)^{2 -\frac{\mu (p-1)+(N-1)(p-2)}2}, \ \forall \ t \ge T_1.
\end{aligned}
\end{equation}

On the other hand, we have
\begin{align}
\Big(\int_{\R^N}u(x,t)dx\Big)^q\le \int_{|x|\le t+R}|u(x,t)|^qdx  \Big(\int_{|x|\le t+R}dx\Big)^{q-1}, 
\end{align}
and consequently we deduce that
\begin{align}\label{f0qsup}
G^q(t)\le  C\big(t+1 \big)^{N(q-1)+\frac{\mu q}{2}}  \int_{|x|\le t+R}|u(x,t)|^q dx.
\end{align}
Now, by differentiating  \eqref{F'0ineq11} with respect to time, we obtain 
\begin{equation}
\label{F'0ineq1}
(m(t)F'(t))' =   m(t)\int_{\R^N}\left\{|u_t(x,t)|^p+|u(x,t)|^q\right\}dx \ge   m(t)\int_{\R^N}|u(x,t)|^qdx.
\end{equation}
 Combining \eqref{f0qsup} in \eqref{F'0ineq1} and dividing the obtained equation (from \eqref{F'0ineq1}) by $\zeta(t)=\sqrt{m(t)}$, we infer that
\begin{equation}
\label{F'0ineq2}
G''(t) +\frac{\mu(2-\mu)}{4(1+t)^2} G(t)\ge  C\frac{G^q(t)}{\big(1+t \big)^{(N+\frac{\mu}{2})(q-1)}}, \ \forall \ t > 0.
\end{equation}

At this level, we distinguish two cases depending on the value of the parameter $\mu$. \\

{\bf First case ($\mu \ge 2$).} 

For this value of $\mu$, the estimate \eqref{F'0ineq2} yields
\begin{equation}
\label{F'0ineq2-1}
G''(t) \ge  C\frac{G^q(t)}{\big(1+t \big)^{(N+\frac{\mu}{2})(q-1)}}, \ \forall \ t > 0.
\end{equation}
Thanks to the fact that $G(t)=\zeta(t)F(t)$, \eqref{F'0ineq11} and \eqref{F'0ineqmmint} we have $G'(t)>0$. Then, multiplying \eqref{F'0ineq2-1} by $G'(t)$ implies that
\begin{equation}
\label{F25nov3}
\left\{\Big(G'(t)\Big)^2\right\}'
\ge C\frac{\Big(G^{q+1}(t)\Big)'}{(1+t)^{(N+\frac{\mu}{2})(q-1)}}, \ \forall \ t > 0.
\end{equation}
Integrating the above inequality, we obtain
\begin{equation}
\label{F25nov4-1}
\Big(G'(t)\Big)^2
\ge C\frac{G^{q+1}(t)}{(1+t)^{(N+\frac{\mu}{2})(q-1)}} +\left((G'(0))^2-CG^{q+1}(0)\right), \ \forall \ t > 0.
\end{equation}
Observe that the last term in the right-hand side of \eqref{F25nov4-1} is positive since we consider here small initial data, and more precisely this holds for $\e$ small enough.\\
Hence, \eqref{F25nov4-1} yields
\begin{equation}
\label{F25nov6}
\frac{G'(t)}{G^{1+\delta}(t)}
\ge C \frac{G^{\frac{q-1}2-\delta}(t)}{(1+t)^{\frac{(2N+\mu)(q-1)}{4}}}, \ \forall \ t > 0,
\end{equation}
for $\delta>0$ small enough.\\

{\bf Second case ($\mu < 2$).}

First, we recall that, as observed above, we have $G'(t)>0$. Then, multiplying \eqref{F'0ineq2} by $(1+t)^2 G'(t)$ gives
\begin{equation}
\label{F25nov3}
\frac{(1+t)^2}{2}\left(\left(G'(t)\right)^2\right)' + \frac{\mu(2-\mu)}{8} \left(G^2(t)\right)'
\ge C\frac{\Big(G^{q+1}(t)\Big)'}{\big(1+t \big)^{(N+\frac{\mu}{2})(q-1)-2}}, \ \forall \ t > 0.
\end{equation}
Integrating the above inequality and using the fact that $t \mapsto 1/\big(1+t \big)^{(N+\frac{\mu}{2})(q-1)-2}$ is a decreasing function (since $\d N(q-1)-2>0$ because  $q>1+\frac{2}{N}$ since we consider here $q>q_S(N+\mu)$\footnote{  It is clear that if $q \le q_S(N+\mu)$ the blow-up result can be proven by only considering the nonlinearity $|u(x,s)|^q$.}), we have
\begin{equation}
\label{F25nov4}
\begin{array}{c}
\d \frac{(1+t)^2}{2}\left(G'(t)\right)^2 + \frac{\mu(2-\mu)}{8} G^2(t)
\ge C_1\d \frac{G^{q+1}(t)}{\big(1+t \big)^{(N+\frac{\mu}{2})(q-1)-2}}+\vspace{.2cm} \\+\d G^2(0)\left(\frac{\mu(2-\mu)}{8}- C G^{q-1}(0)\right), \ \forall \ t > 0.
\end{array}
\end{equation}
Observe that the last term in the right-hand side of \eqref{F25nov4} is positive since we consider here small initial data, and more precisely this holds for $\e$ small enough. Hence, we have
\begin{equation}
\label{F25nov5}
\d \frac{(1+t)^2}{2}\left(G'(t)\right)^2 + \frac{\mu(2-\mu)}{8} G^2(t)
\ge C_1\d \frac{G^{q+1}(t)}{\big(1+t \big)^{(N+\frac{\mu}{2})(q-1)-2}}.
\end{equation}
We now aim to get rid of the second term in the left-hand side of \eqref{F25nov5}. Indeed,  in the present case $\mu < 2$, we will show that $ \frac{\mu(2-\mu)}{8} G^2(t)$ can be absorbed by the term in the right-hand side of \eqref{F25nov5}, namely $C_1\d G^{q+1}(t)/\big(1+t \big)^{(N+\frac{\mu}{2})(q-1)-2}$. In other words, our target consists in obtaining the following estimate:
\begin{equation}
\label{F25nov6}
\frac{G^{q-1}(t)}{\big(1+t \big)^{(N+\frac{\mu}{2})(q-1)-2}}>\frac{\mu(2-\mu)}{4C_1},
\end{equation}
for all $t\ge T_2$ where $T_2=T_2(\e,\mu)>0$ is given by \eqref{T2-choice} below.\\
Indeed,  employing the estimate  \eqref{F0first-1}, the expression of $G(t)$, the definition of $\lambda(p, q, N)$ (given by \eqref{1.5}) and the expression of $\zeta(t)$ (given by \eqref{G-exp}), we deduce that
\begin{equation}
\label{Gq-11}
\frac{G^{q-1}(t)}{\big(1+t \big)^{(N+\frac{\mu}{2})(q-1)-2}}>C_2  \e^{p(q-1)}(1+t)^{2-\frac{\lambda(p, q, N+\mu)}{2}}, \ \forall \ t \ge T_1.
\end{equation}
Now, we choose $T_2$ such that
\begin{equation}\label{T2-choice}
T_2= \max \left(C_3^{-\frac{2}{4-\lambda(p, q, N+\mu)}} \e^{-\frac{2p(q-1)}{4-\lambda(p, q, N+\mu)}},T_1(\mu)\right),
\end{equation}
where $\d C_3=4 C_1C_2/ (\mu(2-\mu))$. Note that for $\e$ small enough $$T_2=T_2(\e):=C_3^{-\frac{2}{4-\lambda(p, q, N+\mu)}} \e^{-\frac{2p(q-1)}{4-\lambda(p, q, N+\mu)}}.$$
Therefore \eqref{F25nov6} is now proven, and by combining \eqref{F25nov6} in \eqref{F25nov5}, we get
\begin{equation}
\label{F25nov7}
\d (1+t)^2\left(G'(t)\right)^2 
\ge C_1\d \frac{G^{q+1}(t)}{\big(1+t \big)^{(N+\frac{\mu}{2})(q-1)-2}}, \ \forall \ t\ge T_2,
\end{equation}
that we rewrite as follows:
\begin{equation}
\label{F25nov8}
\frac{G'(t)}{G^{1+\delta}(t)}
\ge C  \frac{G^{\frac{q-1}2-\delta}(t)}{(1+t)^{\frac{(2N+\mu)(q-1)}4}}, \ \forall \ t\ge T_2,
\end{equation}
for $\delta>0$ small enough. \\

Therefore, for the two cases $\mu \ge 2$ and $\mu < 2$ we end up with almost the same estimates (\eqref{F25nov6} and \eqref{F25nov8}, respectively); they only differ by the starting times $0$ and $T_2$, respectively. Hence, the  estimate \eqref{F25nov8} holds true for both cases for all $t\ge T_2$ where $T_2$ is given by \eqref{T2-choice}.

Now integrating the inequality \eqref{F25nov8} on $[t_1,t_2]$, for all $t_2>t_1 \ge T_2$, and using   \eqref{F0first-1}, we obtain
\begin{equation}\label{4.14}
\frac1{\delta}\Big (\frac{1}{G^{\delta}(t_1)}-\frac{1}{G^{\delta}(t_2)}\Big)
\ge C  (\e^p )^{\frac{q-1}2-\delta} \int_{t_1}^{t_2} \frac{(1+s)^{(2 -\frac{\mu (p-1)+(N-1)(p-2)}2)(\frac{q-1}2-\delta)}}{(1+s)^{\frac{(2N+\mu)(q-1)}4}}ds, \ \forall \ t_2>t_1 \ge T_2.
\end{equation}
Neglecting the second term in the left-hand side of \eqref{4.14} and using the definition of $\lambda(p, q, N)$ (given by \eqref{1.5}) yield
\begin{equation}\label{eq1/F}
\frac{1}{G^{\delta}(t_1)}
\ge C \delta \e^{\frac{p(q-1)}2-p\delta} \int_{t_1}^{t_2} (1+s)^{-\frac{ \lambda(p, q, N+\mu)}{4}-\delta\left(2 -\frac{\mu (p-1)+(N-1)(p-2)}2\right)}ds.  
\end{equation}
Employing the hypothesis \eqref{assump}, we have $-\frac{ \lambda(p, q, N+\mu)}{4}+1>0$. Hence, we  choose $\delta=\delta_0$ small enough such that $\gamma:=-\frac{ \lambda(p, q, N+\mu)}{4}-\delta_0\left(2 -\frac{\mu (p-1)+(N-1)(p-2)}2\right)>-1$. Then, the estimate \eqref{eq1/F} implies that
\begin{equation}\label{eq1/F-2}
\frac{1}{G^{\delta_0}(t_1)}
\ge C   \e^{\frac{p(q-1)}2-p\delta_0} \, \left((1+t_2)^{\gamma+1}-(1+t_1)^{\gamma+1}\right), \ \forall \ t_2>t_1 \ge T_2.
\end{equation}
Now, using \eqref{F0first-1},  we infer that
\begin{equation}\label{eq1/F-3}
\e^{\frac{p(q-1)}2} \, \left((1+t_2)^{\gamma+1}-(1+t_1)^{\gamma+1}\right) 
\le C_4 (1+t_1)^{-\delta_0\left(2 -\frac{\mu (p-1)+(N-1)(p-2)}2\right)}, \ \forall \ t_2>t_1 \ge T_2.
\end{equation}
Consequently, we have 
\begin{eqnarray}\label{eq1/F-4}
\e^{\frac{p(q-1)}2} \, (1+t_2)^{\gamma+1} 
&\le& C_4 (1+t_1)^{-\delta_0\left(2 -\frac{\mu (p-1)+(N-1)(p-2)}2\right)}\\&&+\e^{\frac{p(q-1)}2} \, (1+t_1)^{\gamma+1}, \ \forall \ t_2>t_1 \ge T_2.\nonumber
\end{eqnarray}
At this level, since $-\frac{ \lambda(p, q, N+\mu)}{4}+1>0$, then for all  $\e>0$, we choose $\tilde{T}_3$ such that
\begin{equation}\label{eq1/F-5}
\tilde{T}_3=C_4^{-\frac{4}{4-\lambda(p, q, N+\mu)}} \e^{-\frac{2p(q-1)}{4-\lambda(p, q, N+\mu)}}.
\end{equation}
Finally, we set $t_1=\max (T_2, \tilde{T}_3)$
Hence, using \eqref{eq1/F-5}, we deduce from \eqref{eq1/F-4} that
\begin{equation}\label{eq1/F-6}
t_2 \le 2^{\frac{1}{\gamma+1}}(1+t_1) \le C \e^{-\frac{2p(q-1)}{4-\lambda(p, q, N+\mu)}}.
\end{equation}

This achieves the proof of Theorem \ref{blowup}.\hfill $\Box$

\section{Proof of Theorem \ref{th_u_t}.}\label{sec-ut}

This section is devoted to the proof of Theorem \ref{th_u_t} which is somehow related to the determining  of the critical exponent associated with the nonlinear term in the problem \eqref{T-sys-bis}. First, we follow the computations already done in Section \ref{aux}  where we gain a better understanding of the linear  problem associated with \eqref{T-sys-bis} which is the same as  the linear problem associated with  \eqref{T-sys}. More precisely, we aim here to take advantage of the techniques used in Section \ref{aux}. We first note that Lemmas \ref{F1} and \ref{F11} remain true for the solution of \eqref{T-sys-bis} instead of \eqref{T-sys} (see Remark \ref{rem3.1} and the beginning of Remark \ref{rem3.2}) since we only used the positivity of the nonlinear terms and not their types. In fact, we proved in Lemma \ref{F11}  that $G_2(t)$ is a coercive function. This is a crucial observation that we will use to improve the blow-up result of  \eqref{T-sys-bis}. To this end, we use similar computations, as in \cite{LT2},  and the new  lower bound for the functional  $G_2(t)$ obtained in \eqref{F2postive}, namely the lower bound $\e / (1+t)^{\mu/2}$ in \cite[Lemma  4.1]{LT2} or  also in \cite[Lemma  3.3]{Our} is improved to reach $\e$ in the present work.

Thanks to the observation described above, we obtain an improvement of the blow-up result in \cite{Palmieri} (respec. \cite{LT2}) passing from   $p \in (1, p_G(N+\sigma)]$ where $p_G(N)$ is the Glassey exponent given by \eqref{Glassey} and $\sigma$ is given by \eqref{sigma} (respec.  $p_G(N+2\mu)$) to $p_G(N+\mu)$. More precisely, our result for (\ref{T-sys-bis}) enhances the corresponding one in  \cite{Palmieri}, for  $\mu \in (0,2)$, and coincides with it for $\mu \ge 2$.

We believe that the obtained critical exponent $p_G(N+\mu)$ may reach the threshold between the blow-up and the global existence regions.

In what follows we will use the equations and the estimates from \eqref{eq5bis} to \eqref{G2+bis3} with omitting the nonlinear term $|u(x,t)|^q$ and keeping the other one related to the nonlinearity $|u_t(x,t)|^p$. Hence, the analogous of the estimate \eqref{G2+bis3} reads 
\begin{equation}\label{G2+bis3-1}
\begin{array}{c}
\d G_2'(t)+\frac{3\Gamma(t)}{4}G_2(t)\ge\Sigma_1(t)+\Sigma_2(t)+\Sigma_3(t), \quad \forall \ t >0,
\end{array}
\end{equation}
where $\Sigma_1(t)$ and $\Sigma_2(t)$ are given, respectively, by \eqref{sigma1-exp} and \eqref{sigma2-exp}, and
\begin{equation}\label{sigma3-exp}
\Sigma_3(t):=  \int_{\R^N}|u_t(x,t)|^p\psi(x,t) dx.
\end{equation}
From \eqref{eq5bis} and \eqref{lambda'lambda1}, we have
\begin{equation}\label{sigma1}
\d \Sigma_1(t) \ge C \, \e+\left(-\frac{\rho'(t)}{2\rho(t)}-\frac{\mu}{4(1+t)}\right)\int_{0}^t \int_{\R^N}|u_t(x,s)|^p\psi(x,s)dx ds, \quad \forall \ t \ge \tilde{T}_1. 
\end{equation}
Now, using  \eqref{lambda'lambda1}, we deduce that there exists $\tilde{T}_3=\tilde{T}_3(\mu) \ge \tilde{T}_2$ such that
\begin{equation}\label{sigma3}
\d \Sigma_1(t) \ge C \, \e+\frac{1}{4}\int_{0}^t \int_{\R^N}|u_t(x,s)|^p\psi(x,s)dx ds, \quad \forall \ t  \ge \tilde{T}_3. 
\end{equation}
Plugging \eqref{sigma3} together with \eqref{sigma3-exp} and \eqref{sigma2} in \eqref{G2+bis3-1}, we deduce that
\begin{equation}\label{G2+bis5}
\begin{array}{rcl}
\d G_2'(t)+\frac{3\Gamma(t)}{4}G_2(t)&\ge& \d \frac{1}{4}\int_{0}^t \int_{\R^N}|u_t(x,s)|^p\psi(x,s)dx ds\vspace{.2cm}\\ &+&  \d \int_{\R^N}|u_t(x,t)|^p\psi(x,t) dx+C_5 \, \e, \quad \forall \ t  \ge \tilde{T}_3.
\end{array}
\end{equation}
Setting
\[
H(t):=
\frac{1}{8}\int_{\tilde{T}_4}^t \int_{\R^N}|u_t(x,s)|^p\psi(x,s)dx ds
+\frac{C_6 \e}{8},
\]
where $C_6=\min(C_5,8C_{G_2})$ ($C_{G_2}$ is defined in Lemma \ref{F11}) and $\tilde{T}_4>\tilde{T}_3$ is chosen such that $\frac{1}{4}-\frac{3\Gamma(t)}{32}>0$ and $\Gamma(t)>0$ for all $t \ge\tilde{T}_4$ (this is possible thanks to \eqref{gamma} and \eqref{lambda'lambda1}), and let
$$\mathcal{F}(t):=G_2(t)-H(t).$$
Hence, we have
\begin{equation}\label{G2+bis6}
\begin{array}{rcl}
\d \mathcal{F}'(t)+\frac{3\Gamma(t)}{4}\mathcal{F}(t) &\ge& \d \left(\frac{1}{4}-\frac{3\Gamma(t)}{32}\right)\int_{\tilde{T}_4}^t \int_{\R^N}|u_t(x,s)|^p\psi(x,s)dx ds\vspace{.2cm}\\ &+&  \d \frac{7}{8}\int_{\R^N}|u_t(x,t)|^p\psi(x,t) dx+C_6 \left(1-\frac{3\Gamma(t)}{32}\right) \e\\
&\ge&0, \quad \forall \ t \ge \tilde{T}_4.
\end{array}
\end{equation}
Multiplying  \eqref{G2+bis6} by $\frac{(1+t)^{3\mu/4}}{\rho^{3/2}(t)}$ and integrating over $(\tilde{T}_4,t)$, we deduce  that
\begin{align}\label{est-G111}
 \mathcal{F}(t)
\ge \mathcal{F}(\tilde{T}_4)\frac{(1+\tilde{T}_4)^{3\mu/4}}{\rho^{3/2}(\tilde{T}_4)}\frac{\rho^{3/2}(t)}{(1+t)^{3\mu/4}}, \ \forall \ t \ge \tilde{T}_4,
\end{align}
where $\rho(t)$ is defined by \eqref{lmabdaK}.\\
Therefore we have $\d \mathcal{F}(\tilde{T}_4)=G_2(\tilde{T}_4)-\frac{C_6 \e}{8} \ge G_2(\tilde{T}_4)-C_{G_2}\e \ge 0$ thanks to Lemma \ref{F11} and the fact that $C_6=\min(C_5,8C_{G_2}) \le 8C_{G_2}$. \\
Then, we have 
\begin{equation}
\label{G2-est}
G_2(t)\geq H(t), \ \forall \ t \ge \tilde{T}_4.
\end{equation}
By H\"{o}lder's inequality and the estimates \eqref{psi} and \eqref{F2postive}, we can bound the nonlinear term as follows:
\begin{equation}
\begin{array}{rcl}
\d \int_{\R^N}|u_t(x,t)|^p\psi(x,t)dx &\geq&\d G_2^p(t)\left(\int_{|x|\leq t+R}\psi(x,t)dx\right)^{-(p-1)} \vspace{.2cm}\\ &\geq& C G_2^p(t) \rho^{-(p-1)}(t)e^{-(p-1)t}(1+t)^{-\frac{(N-1)(p-1)}2}.
\end{array}
\end{equation}
Using \eqref{pho-est}, we obtain  
\begin{equation}
\d \int_{\R^N}|u_t(x,t)|^p\psi(x,t)dx \geq C G_2^p(t)(1+t)^{-\frac{(N+\mu-1)(p-1)}2}, \ \forall \ t \ge \tilde{T}_4.
\end{equation}
From the above estimate and \eqref{G2-est}, we infer that
\begin{equation}
\label{inequalityfornonlinearin}
H'(t)\geq C H^p(t)(1+t)^{-\frac{(N+\mu-1)(p-1)}2}, \quad \forall \ t \ge \tilde{T}_4.
\end{equation}
Since $H(\tilde{T}_4)=C_6 \e/8>0$, 
we can easily get the upper bound of the lifespan estimate in Theorem \ref{th_u_t}.

\section{Appendix}
In this appendix, we will recall some properties of the function $\rho(t)$, the solution of \eqref{lambda}. Hence, following the computations in \cite{Tu-Lin} (with $\eta=1)$, we can write the expression of  $\rho(t)$ as follows:
\begin{equation}\label{lmabdaK-A}
\rho(t)=(t+1)^{\frac{\mu+1}{2}}K_{\frac{\mu-1}2}(t+1),
\end{equation}
where 
$$K_{\nu}(t)=\int_0^\infty\exp(-t\cosh \zeta)\cosh(\nu \zeta)d\zeta,\ \nu\in \mathbb{R}.$$
Using the property of  $\rho(t)$ in the proof of Lemma 2.1 in \cite{Tu-Lin} (with $\eta=1)$, we infer that
\begin{equation}\label{lambda'lambda}
\frac{\rho'(t)}{\rho(t)}=\frac{\mu}{1+t}-\frac{K_{\frac{\mu+1}2}(t+1)}{K_{\frac{\mu-1}2}(t+1)}.
\end{equation}
From \cite{Gaunt}, we have the following property for the function $K_{\mu}(t)$:
\begin{equation}\label{Kmu}
K_{\mu}(t)=\sqrt{\frac{\pi}{2t}}e^{-t} (1+O(t^{-1}), \quad \text{as} \ t \to \infty.
\end{equation}
Combining \eqref{lambda'lambda} and \eqref{Kmu}, we infer that
\begin{equation}\label{lambda'lambda1}
\frac{\rho'(t)}{\rho(t)}=-1+O(t^{-1}), \quad \text{as} \ t \to \infty.
\end{equation}

Finally, we refer the reader to \cite{Erdelyi} for more details about the properties of the function $K_{\mu}(t)$.


\bibliographystyle{plain}

\begin{thebibliography}{20}

\bibitem{dabbicco1}
{M. D'Abbicco}, {\it The threshold of effective damping for semilinear wave equations.}
Math. Methods Appl. Sci. {\bf 38} (6)  (2015), 1032--1045.

\bibitem{dabbicco2}
{M. D'Abbicco and S. Lucente}, {\it A modified test function method for damped wave
equations.} Adv. Nonlinear Stud. {\bf 13} (4) (2013), 867--892.

\bibitem{Dab1}
{M. D'Abbicco and S. Lucente,}  {\it NLWE with a special scale invariant damping in odd space dimension,} Discrete Contin. Dyn. Syst. 2015, Dynamical systems, differential equations and applications. 10th AIMS Conference. Suppl., 312--319.

\bibitem{Dab2}
{M. D'Abbicco,  S. Lucente and M. Reissig,} {\it 
A shift in the Strauss exponent for semilinear wave equations with a not effective damping.} 
J. Differential Equations, {\bf 259} (2015), no. 10, 5040--5073.

\bibitem{Dai}
{W. Dai, Wei, D. Fang and C. Wang}, {\it 
 Global existence and lifespan for semilinear wave equations with mixed nonlinear terms.} J. Differential Equations, {\bf 267} (2019), no. 5, 3328--3354.

\bibitem{Erdelyi}
{ A. Erdelyi, W. Magnus, F. Oberhettinger and F.G. Tricomi,} Higher Transcendental Functions, vol. 2, {\it McGraw-Hill,} New-York, 1953.

\bibitem{Gaunt}
{R.E. Gaunt,} {\it  Inequalities for modified Bessel functions and their integrals.} J. Mathematical
Analysis and Applications, {\bf 420} (2014), 373--386.

\bibitem{Our}
{M. Hamouda and M.A. Hamza}, {\it  Blow-up for  wave equation  with the  scale-invariant damping and combined nonlinearities}. Accepted in Math Meth. Appl. Sci.

\bibitem{Han-Zhou}
{W. Han and Y. Zhou}, {\it Blow up for some semilinear wave equations in multi-space dimensions.} Comm. Partial Differential Equations, {\bf 39} (2014), no. 4, 65--665. 

\bibitem{Hidano1}
{K. Hidano and K. Tsutaya}, {\it Global existence and asymptotic behavior of solutions for nonlinear wave
equations}, Indiana Univ. Math. J., {\bf 44} (1995), 1273--1305.

\bibitem{Hidano3}
{ K. Hidano, C. Wang and K. Yokoyama}, {\it  Combined effects of two nonlinearities in lifespan of small solutions to semi-linear wave equations.} Math. Ann. {\bf 366} (2016), no. 1-2, 667--694.

\bibitem{Hidano2}
{K. Hidano, C. Wang and K. Yokoyama}, {\it The Glassey conjecture with radially symmetric data}, J. Math.
Pures Appl.,  (9) {\bf 98} (2012),  no. 5, 518--541.


\bibitem{Ikeda}
M. Ikeda and M. Sobajima, 
{\it Life-span of solutions to semilinear wave equation with time-dependent critical damping for specially localized initial data.} 
Math. Ann. {\bf 372} (2018), no. 3-4, 1017--1040.

\bibitem{John1}
{F. John}, {\it Blow-up for quasilinear wave equations in three space dimensions}, Comm. Pure Appl. Math., {\bf 34} (1981), 29--51.


\bibitem{John2}
{F. John}, {\it Blow-up of solutions of nonlinear wave equations in three space dimensions.} Manuscripta Math. {\bf 28} (1979), no. 1-3, 235--268.

\bibitem{LT}{N.-A. Lai and H. Takamura},
{\it Blow-up for semilinear damped wave equations with subcritical exponent in the scattering case.}  Nonlinear Anal. {\bf 168} (2018), 222--237.

\bibitem{LT2}{N.-A. Lai and H. Takamura},
{\it Nonexistence of global solutions of nonlinear wave equations with weak time-dependent damping related to Glassey's conjecture.} 
Differential Integral Equations, {\bf 32} (2019), no. 1-2, 37--48.

\bibitem{LT3}{N.-A. Lai and H. Takamura},
{\it Nonexistence of global solutions of wave equations with weak time-dependent damping and combined nonlinearity.} Nonlinear Anal. Real World Appl. {\bf 45} (2019), 83--96.

\bibitem{LTW}{N.-A. Lai, H. Takamura and K. Wakasa},
{\it Blow-up for semilinear wave equations
with the scale invariant damping and super-Fujita exponent},
J. Differential Equations, {\bf 263(9)} (2017), 5377--5394.

\bibitem{Palmieri2}{A. Palmieri},
{\it A global existence result for a semilinear wave equation with scale-invariant
damping and mass in even space dimension.} Math Meth. Appl. Sci. (2019), 
1--27. https://doi.org/10.1002/mma.5542

\bibitem{Palmieri1}{A. Palmieri,} {\it A note on a conjecture for the critical curve of a weakly coupled system of semilinear wave equations with scale-invariant lower order terms. }Vol. {\bf 43}, Issue 11 (2020), 6702--6731.

\bibitem{Palmieri-Reissig}{A. Palmieri and M. Reissig},
{\it A competition between Fujita and Strauss type exponents for blow-up of semi-linear wave equations with scale-invariant damping and mass.} 
J. Differential Equations, {\bf 266} (2019), no. 2-3, 1176--1220.


\bibitem{Palmieri}{A. Palmieri and Z. Tu},
{\it A blow-up result for a semilinear wave equation with scale-invariant damping and mass and nonlinearity of derivative type.} arXiv  (2019): 1905.11025.

\bibitem{Palmieri-Tu}{A. Palmieri and Z. Tu}, 
{\it Lifespan of semilinear wave equation with scale invariant dissipation and mass and sub-Strauss power nonlinearity.} 
J. Math. Anal. Appl. {\bf 470} (2019), no. 1, 447--469.

\bibitem{Rammaha}{M. A. Rammaha},
{\it Finite-time blow-up for nonlinear wave equations in high dimensions},
Comm. Partial Differential Equations, {\bf 12} (1987), (6), 677--700.




\bibitem{Sideris}
{T. C. Sideris}, {\it Global behavior of solutions to nonlinear wave equations in three space dimensions}, Comm. Partial Differential Equations, {\bf 8} (1983), no. 12, 
1291--1323.

\bibitem{Strauss}
{W. A. Strauss}, {\it  Nonlinear scattering theory at low energy.} J. Functional Analysis, {\bf  41} (1981), no. 1, 110--133.

\bibitem{Tu-Lin1}
{Z. Tu,  and  J. Lin}, {\it  A note on the blowup of scale invariant damping wave equation with sub-Strauss exponent, preprint, arXiv:1709.00866v2, 2017. }



\bibitem{Tu-Lin}
{Z. Tu,  and  J. Lin}, {\it  
Life-span of semilinear wave equations with scale-invariant damping: critical Strauss exponent case. }
Differential Integral Equations, {\bf 32} (2019), no. 5-6, 249--264.

\bibitem{Tzvetkov}
{N. Tzvetkov}, {\it Existence of global solutions to nonlinear massless Dirac system and wave equation
with small data}, Tsukuba J. Math., {\bf 22} (1998), 193--211.


\bibitem{wakasugi}
{K. Wakasugi}, {\it  Critical exponent for the semilinear wave equation with scale invariant
damping.} In: M. Ruzhansky , V. Turunen (Eds.) Fourier Analysis. Trends in Mathematics.
Birkh¨auser, Cham (2014). https://doi.org/10.1007/978-3-319-02550-6 19.


\bibitem{Wang}{C. Wang and H. Zhou},
{\it   Almost global existence for semilinear wave equations with mixed nonlinearities in four space dimensions.} J. Math. Anal. Appl. {\bf 459} (2018), no. 1, 236--246.
 
\bibitem{Wirth1}{J. Wirth},
{\it Solution representations for a wave equation with weak dissipation},
Math. Methods Appl. Sci., {\bf 27} (2004),  101--124.


\bibitem{YZ06}
{B. Yordanov and Q. S. Zhang}, {\it Finite time blow up for critical wave equations in high dimensions}, J. Funct. Anal., {\bf 231} (2006), 361--374.

\bibitem{Zhou}
{Y. Zhou}, {\it  Blow up of solutions to semilinear wave equations with critical exponent in high dimensions.} Chin. Ann. Math. Ser. B {\bf 28} (2007), no. 2, 205--212.

\bibitem{Zhou1}
{Y. Zhou}, {\it Blow-up of solutions to the Cauchy problem for nonlinear wave equations}, Chin. Ann. Math., {\bf 22B}  (3) (2001), 275--280.




\end{thebibliography}

\end{document}